\theoremstyle{plain}
\newtheorem{theorem}{Theorem}%[section]
\newtheorem{corollary}[theorem]{Corollary}
\newtheorem{lemma}[theorem]{Lemma}
\newtheorem{proposition}[theorem]{Proposition}
\theoremstyle{definition}
\newtheorem{example}[theorem]{Example}
\newtheorem{definition}[theorem]{Definition}
\newtheorem{remark}[theorem]{Remark}
\theoremstyle{remark}
\begin{document}
\title{Perinormal rings with zero divisors}

\author{Tiberiu Dumitrescu and Anam Rani}

\address{Facultatea de Matematica si Informatica,University of Bucharest,14 A\-ca\-de\-mi\-ei Str., Bucharest, RO 010014, Romania}
\email{tiberiu@fmi.unibuc.ro, tiberiu\_dumitrescu2003@yahoo.com}

\address{Abdus Salam School of Mathematical Sciences, GC University,
Lahore. 68-B, New Muslim Town, Lahore 54600, Pakistan} \email{anamrane@gmail.com}

\begin{abstract}
We  extend to rings with zero-divisors the concept of perinormal domain introduced by N. Epstein and J. Shapiro. A ring  $A$ is  called perinormal if every  overring of $A$ which satisfies going down over $A$ is $A$-flat.
The Pr\"ufer rings and the Marot Krull rings are perinormal.
%Recently,  N. Epstein and J. Shapiro introduced and studied the  perinormal domains: those domains $A$ whose  overrings satisfying going down over $A$ are flat $A$-modules. We show that every Pr\"ufer v-multiplication domain is perinormal and has no proper lying over overrings.
%Conversely, we show that a w-treed  perinormal domain is a Pr\"ufer v-multiplication domain.
%We give two pull-back constructions that produce perinormal/non-perinormal domains.
\end{abstract}

\keywords{Perinormal ring,  Krull ring, zero-divisor}

\thanks{2010 Mathematics Subject Classification:  Primary 13B21, Secondary  13F05}.

\maketitle

\section{Introduction}

In their  paper \cite{ES}, N. Epstein and J. Shapiro introduced and studied   {perinormal domains}, that is, those domains whose going down overrings are flat $A$-modules, cf. \cite[Proposition 2.4]{ES}. Here  a going down overring $B$ of $A$ means a ring between $A$ and its quotient field such that going down holds for $A\subseteq B$ (see \cite[page 31]{M}).
Among other results, they showed that perinormality is a local property  \cite[Theorem 2.3]{ES}, a localization of a perinormal domain at a height one prime ideal is a valuation domain  \cite[Proposition 3.2]{ES} and every (generalized) Krull domain is perinormal \cite[Theorem 3.10]{ES}. They also  characterized the universally catenary Noetherian perinormal domains \cite[Theorem 4.7]{ES}.

In \cite{DR}, we extended some of the results in \cite{ES} using the concept of a P-domain introduced by Mott and Zafrullah in \cite{MZ}. A domain $A$ is called a {\em P-domain} if $A_P$ is a valuation domain for every  prime ideal  $P$ which is minimal over an ideal of the form $Aa:b$ with $a,b\in A-\{0\}$.
The Pr\"ufer v-multiplication domains (hence the Krull domains and the GCD domains) are P-domains \cite[Theorem 3.1]{MZ}.
Recall that  $A$ is a {\em Pr\"ufer v-multiplication domain} if for every  finitely generated nonzero ideal $I$, there exists a finitely generated nonzero ideal $J$ such that the divisorial closure of $IJ$ is a principal (see \cite[Section 34]{G} and \cite{MZ}). The main result of \cite{DR} asserts that every P-domain is perinormal \cite[Theorem]{DR}.

The purpose of this paper is to study {\em perinormal rings with zero-divisors}. Almost all properties of perinormal domains can be recovered for perinormal rings in some appropriate setup.
Let $A$ be a ring, $Reg(A)$ the set regular elements (that is, non-zero-divisors) of $A$ and $K$ the total quotient ring of $A$, that is, the fraction ring of $A$ with denominators in $Reg(A)$. An ideal of $A$ is called {\em regular} if it contains regular elements. As usual, by  an {\em overring} $B$ of $A$ we mean a ring between $A$ and $K$. We say that $B$ is a {\em GD-overring} if $A\subseteq B$ satisfies going down. We call a ring  $A$  {\em perinormal} if every  GD-overring of $A$  is flat.

In Section $2$ we prove some basic facts about the perinormal rings.
Every Pr\"ufer ring is perinormal (Proposition \ref{1}).
While perinormality is not a local property in the classical sense (Example \ref{26}), it is a regular local property (Proposition \ref{4}). More precisely,  a ring $A$ is perinormal if and only if $A_{(M)}$ is perinormal for each maximal ideal $M$ of $A$. Here $A_{(M)}$ is the regular quotient ring \cite[page 28]{H}, that is, the fraction ring of $A$ with denominators in  $S=(A-M)\cap Reg(A)$.
A perinormal ring has no proper integral unibranched overrings (Proposition \ref{5}). Here a ring extension $A\subseteq B$ is called unibranched if the contraction map  $Spec (B)\rightarrow Spec (A)$ is  bijective.
A direct product ring $A\times B$ is a perinormal ring if and only if $A$ and $B$ are perinormal (Theorem \ref{8}). In particular, the integral closure  of a reduced Noetherian ring  is  perinormal (Corollary \ref{27}).

For the rest of the paper, we assume that   all rings are Marot rings, that is, their regular ideals are generated by regular elements, cf. \cite[page 31]{H}. In Section $3$ we extend the main result of \cite{DR} for rings with zero-divisors.
We call a ring $A$  a  {\em $P$-ring} if $A_{(P)}$ is a (Manis) valuation ring on $K$ whenever $P$ is minimal over  $bA:a$ for some $a\in A$ and $b\in Reg(A)$.
The Krull rings are P-rings. Adapting some material from \cite{MZ}  for rings with zero-divisors, we prove that every  $P$-ring  is perinormal (Theorem \ref{23}).

The main result of Section $4$, Theorem \ref{28}, extends for rings with zero-divisors the  characterization given in \cite[Theorem 4.7]{ES} for the universally catenary Noetherian perinormal domains.
To this aim, we  extend  a result of McAdam \cite[Theorem 2]{Mc} for rings with zero-divisors: if $A$ is a Noetherian ring and  $B$ an integral
GD-overring of $A$, then  the non-minimal regular prime ideals of $A$  are unibranched in $B$ (Theorem \ref{17}).

Throughout this paper all rings are commutative unitary rings. Any unexplained terminology is standard like in \cite{AM}, \cite{G}, \cite{H} or \cite{LM}. For basic facts on rings with zero-divisors we refere the reader to \cite{H}.

\section{Basic facts}

We write again the key definition of this paper.

\begin{definition}A ring  $A$ is called {\em perinormal} if every  GD-overring of $A$  is flat (as an $A$-module).\end{definition}

Note that a domain $D$ is a perinormal ring if and only if $D$ is a perinormal domain in the sense of  \cite{ES}. A total quotient ring (e.g. a zero-dimensional ring) is a trivial example of perinormal ring.
Recall  that a ring $A$ is a {\em Pr\"ufer} ring if its  finitely generated regular ideals are invertible, cf. \cite[Chapter II]{H}. A regular ideal $I$ of $A$ is {\em invertible} if $II^{-1}=A$ where $I^{-1}=\{ x\in K\mid xI\subseteq A\}$ and $K$ is the total quotient ring of $A$, cf. \cite[page 29]{H}. By \cite[Theorem 13]{Gf}, a ring $A$ is Pr\"ufer if and only if its overrings  are $A$-flat, so we have:

\begin{proposition}\label{1}
Every Pr\"ufer ring is perinormal.
\end{proposition}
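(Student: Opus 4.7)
The plan is essentially a one-line deduction from a result already cited in the excerpt. Perinormality asks that every GD-overring of $A$ be $A$-flat, while a GD-overring is by definition a particular kind of overring (namely, one between $A$ and $K$ satisfying going down over $A$). So to prove the proposition it suffices to establish that \emph{every} overring of a Pr\"ufer ring is $A$-flat; the going down hypothesis is then a free assumption one simply discards.

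The key ingredient is Griffin's characterization of Pr\"ufer rings, cited as \cite[Theorem 13]{Gf}: a ring $A$ is Pr\"ufer if and only if every overring of $A$ is flat as an $A$-module. Granting this, the deduction is immediate: let $A$ be Pr\"ufer and let $B$ be any GD-overring of $A$. Since $B$ is, in particular, an overring of $A$, Griffin's theorem gives that $B$ is $A$-flat. Because $B$ was an arbitrary GD-overring, $A$ is perinormal by the definition stated just before the proposition.

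There is, honestly, no substantive obstacle: the depth of the statement lies entirely in Griffin's theorem, and once that theorem is in hand, the passage from ``every overring is flat'' to ``every GD-overring is flat'' is trivial inclusion of classes. The only thing worth remarking on is that perinormality of $A$ is, for Pr\"ufer rings, a strict weakening of the Pr\"ufer condition, so one is applying the nontrivial direction of Griffin's equivalence (Pr\"ufer $\Rightarrow$ all overrings flat), not the converse. Accordingly, I would present the proof as a two-sentence invocation of \cite[Theorem 13]{Gf}, with no further argument needed.
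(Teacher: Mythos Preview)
Your proposal is correct and matches the paper's approach exactly: the paper simply cites \cite[Theorem 13]{Gf} to conclude that every overring of a Pr\"ufer ring is flat, and the proposition follows immediately.
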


Let $A$ be a ring with total quotient ring $K$ and $P$ a prime ideal of $A$. Recall  that besides $A_P$, which in general is not an overring of $A$, it is customarily to consider the following two overrings of $A$:
the {\em large quotient ring} $A_{[P]}=\{ x\in K\mid  sx\in A$ for some $s\in A-P\}$ and the  {\em regular quotient ring} $A_{(P)}$ which is the fraction ring $A_S$ where $S=(A-P)\cap Reg(A)$, cf. \cite[page 28]{H}. We have $A\subseteq A_{(P)} \subseteq A_{[P]}\subseteq K$. As shown in \cite[Theorem 4.5]{Bo}, $A_{[P]}$ is not always $A$-flat.
In the next lemma, we recall some well-known facts about flatness.

\begin{lemma}\label{2}
Let $A$ be a ring and  $B\subseteq C$ two overrings of $A$.

$(a)$ The extension $A\subseteq B$  is flat if and only if $A_{[M\cap A]}=B_{[M]}$  for all maximal ideals $M$ of $B$.

$(b)$  If $A\subseteq B$ and $B\subseteq C$ are flat, then $A\subseteq B$ is flat.

$(c)$  If $A\subseteq C$ is flat, then $B\subseteq C$ is flat.

$(d)$ If  $A\subseteq B$  is flat and integral, then $A=B$.
\end{lemma}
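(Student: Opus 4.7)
The plan is to dispatch each of the four parts in turn, all being overring-version adaptations of standard facts about flat extensions, adjusted to the zero-divisor setting.

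For part (a), this is the Richman--Griffin-style characterization of flat overrings. In the domain case one has $A \subseteq B$ flat iff $B_Q = A_{Q \cap A}$ for every prime $Q$ of $B$; when $A$ has zero-divisors, the correct substitute for ordinary localization at $P$ is the large quotient ring $A_{[P]}$, since these are the natural fraction rings attached to a prime once one restricts to overrings sitting inside the total quotient ring $K$. I would invoke this characterization as part of the theory developed in \cite{Gf} already cited for Proposition \ref{1}: the forward direction reduces flatness to its behaviour at maximal ideals of $B$ (via the local nature of flatness over $B$), and then uses that each $B_{[M]}$ is a flat overring-style localization of $A_{[M\cap A]}$ which is forced to coincide with it; the converse combines the local criterion for flatness with the fact that each $B_{[M]}$, being equal to $A_{[M\cap A]}$, is a localization and hence flat over $A$.

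Part (b), which as written is tautological and I therefore read as asserting that $A \subseteq C$ is flat, is immediate transitivity of flatness of modules: the functor $C \otimes_A (-)$ equals $C \otimes_B (B \otimes_A (-))$, a composition of exact functors. For part (c) I would apply (a). Let $M$ be a maximal ideal of $C$, set $P = M \cap B$ and $Q = M \cap A$. Large quotient rings respect inclusions when the multiplicative system grows, so $A_{[Q]} \subseteq B_{[P]} \subseteq C_{[M]}$ inside $K$. Flatness of $A \subseteq C$ collapses the outer terms, hence $B_{[P]} = C_{[M]}$, and a second application of (a) yields flatness of $B \subseteq C$.

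For part (d), integrality together with (a) forces $A = B$. Given $b \in B$, for each maximal ideal $M$ of $B$ the equality $B_{[M]} = A_{[M \cap A]}$ supplies $s_M \in A \setminus (M \cap A)$ with $s_M b \in A$. Thus the conductor $I := \{ s \in A : s b \in A \}$ lies in no contracted maximal ideal of $B$. Integrality of $A \subseteq B$ (lying-over and going-up, together with the fact that contractions of maximal ideals under integral extensions are maximal) ensures that every maximal ideal of $A$ is the contraction of some maximal ideal of $B$, so $I$ is contained in no maximal ideal of $A$, giving $I = A$ and $b \in A$; thus $B = A$. The only delicate point in the whole lemma is the characterization in (a); once that is secured the remaining three parts are routine.
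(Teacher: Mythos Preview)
Your proposal is correct and essentially follows the paper's approach: part (a) is Griffin's characterization (the paper cites \cite[Proposition 10]{Gf}), part (b) is transitivity of flatness (the paper cites \cite[Theorem (3.B)]{M}, and you rightly correct the obvious typo in the conclusion), and your argument for (c) via the chain $A_{[Q]}\subseteq B_{[P]}\subseteq C_{[M]}$ collapsed by (a) is exactly what the paper does. For (d) the paper simply cites \cite[Proposition 12]{Gf}, whereas you supply the underlying conductor argument; this is a harmless elaboration rather than a different route.
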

\begin{proof}
$(a)$  is a part of \cite[Proposition 10]{Gf}.
$(b)$ The transitivity of flatness holds in general (see for instance \cite[Theorem (3.B)]{M}).
$(c)$ Let $M\in Max (C)$ and set $Q:=M\cap B$, $P:=Q\cap A$.
By $(a)$ we have $C_{[M]}=A_{[P]}\subseteq B_{[Q]}\subseteq C_{[M]}$, so
$B_{[Q]}=C_{[M]}$, thus  $B\subseteq C$ is flat applying $(a)$ again.
$(d)$ is \cite[Proposition 12]{Gf}.
\end{proof}

%@@

\begin{proposition}\label{3}
Let $A$  be a perinormal ring. If   $B$ is a flat overring of $A$, then $B$ is perinormal. In particular, $A_{(P)}$ is perinormal for each  prime ideal  $P$ of $A$.
\end{proposition}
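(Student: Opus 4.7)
The plan is to reduce perinormality of $B$ to that of $A$ by lifting every GD-overring of $B$ to a GD-overring of $A$. So fix a GD-overring $C$ of $B$; I aim to show $(i)$ that $C$ is an overring of $A$, $(ii)$ that $A\subseteq C$ satisfies going down, and $(iii)$ that $C$ is $B$-flat, combining perinormality of $A$ with Lemma \ref{2}.

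For $(i)$, since $B$ is $A$-flat, any regular element $s\in A$ remains regular in $B$ (tensoring the injection $A\xrightarrow{\cdot s}A$ with $B$ preserves injectivity). A short calculation then shows that every regular element of $B$ is invertible in $K$, the total quotient ring of $A$: if $t\in Reg(B)$ and $tx=0$ in $K$, write $x=a/s$ with $a\in A$ and $s\in Reg(A)$, clear $s$ to obtain $s''ta=0$ in $A$ for some $s''\in Reg(A)$, and apply regularity of $s''$ in $B$ followed by regularity of $t$ in $B$ to force $a=0$. Consequently the total quotient ring of $B$ embeds into $K$, so $C$ is an overring of $A$. For $(ii)$, $A\subseteq B$ is flat and hence satisfies going down, while $B\subseteq C$ satisfies going down by hypothesis; since going down is preserved under composition, $A\subseteq C$ is GD. Perinormality of $A$ now forces $C$ to be $A$-flat, and Lemma \ref{2}$(c)$ applied to $A\subseteq B\subseteq C$ upgrades this to $B$-flatness of $C$, proving $(iii)$.

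The ``in particular'' assertion is then immediate: $A_{(P)}$ is the fraction ring of $A$ at the multiplicative subset $(A-P)\cap Reg(A)$, hence a flat overring of $A$, so the first part applies. The only slightly delicate step is $(i)$, the verification that GD-overrings of $B$ actually live inside $K$; once that bookkeeping is dispatched via flatness, the proof reduces to a clean concatenation of Lemma \ref{2}$(c)$, the transitivity of going down, and one invocation of perinormality of $A$.
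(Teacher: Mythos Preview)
Your proof is correct and follows essentially the same route as the paper's: show that a GD-overring $C$ of $B$ is a GD-overring of $A$ (using that flat extensions satisfy going down and that going down is transitive), invoke perinormality of $A$ to get $A\subseteq C$ flat, and then apply Lemma \ref{2}$(c)$ to conclude $B\subseteq C$ is flat. The paper compresses this into three lines and omits your step $(i)$ entirely, tacitly using the standard fact that any overring $B$ of $A$ has the same total quotient ring $K$; your explicit verification of this is fine, though note that flatness is not actually needed there, since every $s\in Reg(A)$ is already a unit in $K\supseteq B$ and hence automatically regular in $B$.
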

 \begin{proof}
Let $C$ be a GD-overring of $B$. Since $B$ is $A$-flat it is a GD-overring of $A$, so $C$ is a GD-overring of $A$. As $A$ is perinormal, then $A\subseteq C$ is flat, so $B\subseteq C$ is flat by Lemma \ref{2}. Thus $B$ is perinormal.
\end{proof}

\begin{example}\label{26}
There exist a perinormal ring $A$ having a prime ideal $P$ such that $A_P$ is not perinormal. Indeed, let $(B,M)$ be a local one-dimenional domain which is not integrally closed (e.g. $\mathbb{Q}[[t^2,t^3]]$), $C:=B[X,Y]/X(M,X,Y)$ with $X,Y$ are indeterminates and $A:=C_N$ where $N=(M,X,Y)$.
Then $A$ is local with the maximal ideal $NA_N$ annihilated by $X$, so $A$ equals its total quotient ring, hence $A$ is perinormal. On the other hand, if $P$ is the prime ideal $(M,X)A$, then $A_P\simeq B(Y)$  is a one-dimenional domain which is not integrally closed (see \cite[Section 33]{G}), hence $A_P$ is not perinormal by \cite[Proposition 3.2]{ES}.
\end{example}

\begin{proposition}\label{4}
A ring $A$ is perinormal if and only if $A_{(M)}$ is perinormal for each maximal ideal $M$ of $A$.
\end{proposition}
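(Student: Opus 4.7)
My plan is to split the biconditional in the usual way. The forward implication is essentially free from the tools already assembled: since $A_{(M)} = S^{-1}A$ with $S = (A\setminus M)\cap Reg(A)$ is a localization, it is $A$-flat, so Proposition \ref{3} applies directly and gives that $A_{(M)}$ is perinormal.

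For the converse, I would take a GD-overring $B$ of $A$ and use Lemma \ref{2}(a) to reduce flatness of $A\subseteq B$ to the equality $A_{[N\cap A]} = B_{[N]}$ at each maximal ideal $N$ of $B$. Fix such an $N$, set $P = N\cap A$, pick a maximal ideal $M$ of $A$ with $P\subseteq M$, and let $S = (A\setminus M)\cap Reg(A)$. A short argument (any $s\in S\cap N$ would lie in $A\cap N = P\subseteq M$, contradicting $s\notin M$) shows $S\cap N=\emptyset$, so $N' := NS^{-1}B$ is a prime of $S^{-1}B$ contracting to $N$ in $B$; since $N$ is maximal in $B$, $N'$ is maximal in $S^{-1}B$. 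The extension $A_{(M)} = S^{-1}A \subseteq S^{-1}B$ inherits going down from $A\subseteq B$ (a standard exercise: localizing the source and target at the same multiplicative set of $A$ preserves GD), so $S^{-1}B$ is a GD-overring of the perinormal ring $A_{(M)}$, hence $A_{(M)}\subseteq S^{-1}B$ is flat.

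Now applying Lemma \ref{2}(a) to this flat extension at $N'$ gives
\[
\bigl(A_{(M)}\bigr)_{[N'\cap A_{(M)}]} \;=\; (S^{-1}B)_{[N']}.
\]
It remains to identify $N'\cap A_{(M)} = PA_{(M)}$ and then check the two ``coincidence'' identities
\[
A_{[P]} = \bigl(A_{(M)}\bigr)_{[PA_{(M)}]} \quad\text{and}\quad B_{[N]} = (S^{-1}B)_{[N']},
\]
which together yield $A_{[P]}=B_{[N]}$ and finish the proof. Both identities are tracked through the definitions: since $S$ consists of regular elements of $A$ (units of $S^{-1}A$ and $S^{-1}B$ respectively), denominators from $S$ can be absorbed into the numerator, and $PA_{(M)}\cap A = P$, $N'\cap B = N$ guarantee the complements match up.

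The one step that needs a little care, and which I view as the main obstacle, is verifying the two large-quotient coincidences above; they look innocuous but rely on the interplay between the Marot-style large quotient ring $(-)_{[-]}$ and localization at regular elements. Apart from this bookkeeping, the argument is a clean transplantation of the standard local-global principle for flatness into the regular-localization setting allowed by Lemma \ref{2}(a).
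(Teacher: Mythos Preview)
Your argument is correct, and the two coincidence identities do go through by exactly the computation you outline (using only that $S\subseteq Reg(A)$ and $S\subseteq A\setminus P$, no Marot hypothesis is needed here, which matters since Proposition~\ref{4} is stated before that standing assumption).

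However, the paper's proof takes a different and shorter route for the converse. Instead of working at maximal ideals of $B$ via Lemma~\ref{2}(a), the paper runs over maximal ideals $M$ of $A$: with $S=(A\setminus M)\cap Reg(A)$ one gets $A_{(M)}\subseteq S^{-1}B$ flat exactly as you do, but then the paper simply localizes further at $M$ to obtain that $A_M\subseteq B_M$ is flat (using $(A_S)_{MA_S}\cong A_M$ since $S\cap M=\emptyset$), and concludes by the classical local criterion for flatness \cite[Theorem (3.J)]{M}. This sidesteps the large-quotient-ring bookkeeping entirely. Your approach has the virtue of staying within the overring framework and using only Lemma~\ref{2}(a), which is thematically consistent with the paper's toolkit; the paper's approach trades that for brevity by passing through the ordinary localizations $A_M$, $B_M$ at the last step.
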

\begin{proof}
The implication $(\Rightarrow)$ is covered by Proposition \ref{3}.  $(\Leftarrow)$ Let $B$ be a GD-overring of $A$, $M\in Max A$ and set $S=(A-M)\cap Reg(A)$. Then $B_{S}$ is a GD-overring of $A_S=A_{(M)}$. Since $A_{(M)}$ is perinormal, it follows that $A_S\subseteq B_S$ is flat, hence $A_M\subseteq B_M=B\otimes_A A_M$ is flat, because  $M$ is disjoint of $S$, so $(A_S)_{MA_S}\simeq A_M$.
Thus $A\subseteq B$ is flat by \cite[Theorem (3.J)]{M}.
\end{proof}

\begin{remark}
The preceding proof can be easily adapted to show that $A$ is perinormal whenever $A_M$ is perinormal for each maximal ideal $M$.
\end{remark}

%@

Recall that a ring extension $A\subseteq B$ is {\em unibranched} if the contraction map  $Spec (B)\rightarrow Spec (A)$ is  bijective.
The following result extends \cite[Proposition 3.3]{ES} and a part of \cite[Corollary 3.4]{ES}.

\begin{proposition}\label{5}
Let $A$ be a perinormal ring with total quotient ring $K$. If $B$ is an integral unibranched overring of $A$, then $A=B$. In particular, if $b\in K$ and $b^2,b^3\in A$, then $b\in A$.
\end{proposition}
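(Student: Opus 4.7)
The plan is to observe that an integral unibranched overring is automatically a GD-overring, so perinormality combined with Lemma \ref{2}(d) will force equality. In detail, I would start with an integral unibranched extension $A \subseteq B$ and two primes $P_1 \subsetneq P_2$ of $A$, together with $Q_2 \in \Spec B$ lying over $P_2$; I would pick a prime $Q_1 \in \Spec B$ over $P_1$ (lying over for integral extensions) and then apply going up to $Q_1$ over the chain $P_1 \subsetneq P_2$ to obtain $Q' \supseteq Q_1$ over $P_2$. The unibranched hypothesis forces $Q' = Q_2$, so $Q_1 \subsetneq Q_2$, establishing going down. Hence $B$ is a GD-overring of $A$; perinormality gives that $A \subseteq B$ is flat, and Lemma \ref{2}(d) then delivers $A = B$.

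For the ``in particular'' clause, I would set $c := b^2$, $d := b^3$ (both in $A$) and $B := A[b] \subseteq K$. The monic relation $X^2 - c$ makes $B$ integral over $A$, and the identities $b^{n+2} = c b^n$ yield $B = A + Ab$. The task reduces to showing $B$ is unibranched over $A$. For $P \in \Spec A$, the primes of $B$ lying over $P$ correspond to primes of the fibre $B \otimes_A k(P)$, itself a quotient of $T := k(P)[X]/(X^2 - \bar c,\, X^3 - \bar d)$, where $\bar c, \bar d$ denote the images in $k(P)$. I would first extract from $X \cdot X^2 = X^3$ the relation $\bar c X = \bar d$ in $T$, and from $d^2 = (b^3)^2 = (b^2)^3 = c^3$ in $A$ the compatibility $\bar d^2 = \bar c^3$. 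If $\bar c \neq 0$ I solve $X = \bar d/\bar c$ and verify $X^2 = \bar c$ using $\bar d^2 = \bar c^3$, obtaining $T \cong k(P)$. If $\bar c = 0$, the relation $\bar c X = \bar d$ forces $\bar d = 0$ (otherwise $T = 0$ and no prime lies over $P$), and $T \cong k(P)[X]/(X^2)$ has the unique prime $(X)$. In either case, at most one prime of $B$ lies over $P$, so $B$ is unibranched, and the first part of the proposition gives $b \in A$.

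The only nonroutine point is the initial observation that integral plus unibranched implies going down, which is what lets the perinormality hypothesis engage; everything after that is bookkeeping, with the fibre computation for $A[b]$ tidied up by the derived identity $\bar c X = \bar d$.
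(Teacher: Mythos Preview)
Your argument is correct and, for the main assertion, matches the paper's proof exactly: both use lying over plus going up together with the unibranched hypothesis to deduce going down, then invoke perinormality and Lemma~\ref{2}(d). For the ``in particular'' clause the paper simply cites \cite[Lemma 2.4]{S} for the fact that $A\subseteq A[b]$ is unibranched, whereas you supply an explicit fibre computation; your computation is fine (note that when $\bar c=0$ the identity $\bar d^{\,2}=\bar c^{\,3}$ already forces $\bar d=0$ in the field $k(P)$, so the parenthetical appeal to lying over is unnecessary).
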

\begin{proof}
The contraction map $F:Spec(B)\rightarrow Spec(A)$ is not only a bijection but also an isomorphism of ordered sets. Indeed, let $Q_1,Q_2\in Spec(B)$ such that $F(Q_1)\subseteq F(Q_2)$. By going up, there exists  $Q_3\in Spec(B)$ with $Q_1\subseteq Q_3$ such that $F(Q_2)=F(Q_3)$, so $Q_1\subseteq Q_2=Q_3$ because $F$ is injective. Now is clear that $A\subseteq B$ satisfies going down. Since $A$ is perinormal,  we get that $A\subseteq B$ is  flat, so $A=B$, cf. \cite[Proposition 12]{Gf}. The 'in particular' assertion now easily
follows because  $A\subseteq A[b]$ is unibranched, cf. \cite[Lemma 2.4]{S}.
\end{proof}

A reduced ring $A$ satisfying the conclusion of the 'in particular' assertion above is called {\em seminormal}, see for instance the paragraph after Corollary 3.4 in \cite{S}.
\\[2mm] In order to prove that a direct product ring $A\times B$ is a perinormal ring if and only if $A$ and $B$ are perinormal, we  need the following well-known result and a simple lemma.

%{\em (\cite[Proposition 5.4.2]{HGK})}
\begin{proposition}  \label{14}
The ring extensions $A\subseteq C$ and  $B\subseteq D$ are flat if and only if
$A\times B \subseteq C\times D$ is flat.
\end{proposition}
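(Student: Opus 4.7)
The plan is to exploit the idempotent decomposition of any $A\times B$-module. Writing $e_1=(1,0)$ and $e_2=(0,1)$, every $A\times B$-module $M$ splits canonically as $M=e_1M\oplus e_2M$, where $e_1M$ is naturally an $A$-module (its $A\times B$-action factors through the first projection $A\times B\to A$) and similarly $e_2M$ is a $B$-module. Applied to $C\times D$ this gives the $A\times B$-module decomposition $C\times D=C\oplus D$.

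For the forward direction, assume $A\subseteq C$ and $B\subseteq D$ are flat. Given any $A\times B$-module $M$, one computes, using the idempotent splitting and the fact that $e_1D=0$ and $e_2C=0$, that
\[
M\otimes_{A\times B}(C\times D)\ \cong\ (e_1M\otimes_A C)\ \oplus\ (e_2M\otimes_B D).
\]
If $M\hookrightarrow N$ is an injection of $A\times B$-modules, then applying the idempotents $e_1,e_2$ yields injections $e_1M\hookrightarrow e_1N$ and $e_2M\hookrightarrow e_2N$; these remain injective after tensoring with the flat extensions $C/A$ and $D/B$ respectively, so their direct sum gives injectivity of $M\otimes_{A\times B}(C\times D)\to N\otimes_{A\times B}(C\times D)$. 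Hence $A\times B\subseteq C\times D$ is flat.

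For the reverse direction, flatness is preserved under base change: if $A\times B\to C\times D$ is flat, so is $R\otimes_{A\times B}(C\times D)$ over $R$ for any $A\times B$-algebra $R$. Taking $R=A$ via the projection $A\times B\to A=(A\times B)/(0\times B)$, we obtain
\[
A\otimes_{A\times B}(C\times D)\ =\ (C\times D)\big/(0\times B)(C\times D)\ =\ (C\times D)\big/(0\times D)\ \cong\ C,
\]
so $C$ is flat over $A$. The symmetric argument gives $D$ flat over $B$.

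No step looks genuinely hard; the main thing to be careful about is the canonical identification of $e_1M$ as an $A$-module and the verification that $M\otimes_{A\times B}C=e_1M\otimes_A C$ (rather than double-counting or losing a summand). Once that bookkeeping is set up, both directions are immediate.
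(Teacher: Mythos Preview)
Your argument is correct. The idempotent splitting $M=e_1M\oplus e_2M$ together with the identification $M\otimes_{A\times B}(C\times D)\cong (e_1M\otimes_A C)\oplus(e_2M\otimes_B D)$ is exactly the right bookkeeping, and your verification of that identification (via $e_2$ killing $C$ and $e_1$ killing $D$) is sound. The reverse direction by base change along the projection $A\times B\to A$ is also fine; note that $(0\times B)(C\times D)=0\times D$ uses only that $D$ is a $B$-algebra, so $BD=D$.

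As for comparison: the paper does not actually prove this proposition. It is introduced as a ``well-known result'' and stated without proof, serving only as input to Theorem~\ref{8}. So you have supplied a self-contained argument where the paper gives none. Your approach is the standard one and would be the expected justification of the statement; there is nothing to contrast methodologically.
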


\begin{lemma}\label{7}
Let $A\subseteq C$ be a ring extension and $B$ a ring. Then  $A\subseteq C$ satisfies going down if and only if   $A \times B \subseteq C \times B$ satisfies going down.
\end{lemma}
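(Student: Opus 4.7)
The plan is to reduce the lemma to the well-known description of the prime spectrum of a product ring. Every prime ideal of $R\times S$ has the form $P\times S$ with $P\in\Spec(R)$ or $R\times Q$ with $Q\in\Spec(S)$, and a prime of one type is never comparable to a prime of the other (since e.g.\ $P\times S\subseteq R\times Q$ would force $S\subseteq Q$, so $Q=S$, contradicting primality). Under the inclusion $A\times B\subseteq C\times B$, contractions are transparent: $(P'\times B)\cap(A\times B)=(P'\cap A)\times B$ and $(C\times Q')\cap(A\times B)=A\times Q'$.

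With these facts in hand, I would verify going down for $A\times B\subseteq C\times B$ by a case analysis on the shape of the chain $\mathfrak p_1\subseteq\mathfrak p_2$ in $\Spec(A\times B)$. If $\mathfrak p_2=A\times Q$ for some $Q\in\Spec(B)$, then the incomparability remark forces $\mathfrak p_1=A\times Q_0$ with $Q_0\subseteq Q$, and any prime of $C\times B$ lying over $\mathfrak p_2$ must equal $C\times Q$; then $C\times Q_0\subseteq C\times Q$ lies over $A\times Q_0$, and going down holds automatically. If instead $\mathfrak p_2=P\times B$ for $P\in\Spec(A)$, then necessarily $\mathfrak p_1=P_0\times B$ with $P_0\subseteq P$, and a prime of $C\times B$ over $\mathfrak p_2$ has the form $P'\times B$ with $P'\cap A=P$. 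Producing a prime of $C\times B$ contained in $P'\times B$ and lying over $P_0\times B$ amounts (again by incomparability) to producing $P_0'\subseteq P'$ in $\Spec(C)$ with $P_0'\cap A=P_0$; this is precisely the going-down condition for $A\subseteq C$.

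The forward direction is immediate from the case analysis. For the converse, given a witness $P_0\subseteq P$ in $\Spec(A)$ and $P'\in\Spec(C)$ with $P'\cap A=P$, apply going down in $A\times B\subseteq C\times B$ to the chain $P_0\times B\subseteq P\times B$ with $P'\times B$ lying above $P\times B$; the prime thus obtained must be of the form $P_0'\times B$ (a prime $C\times Q_0$ would contract to $A\times Q_0\neq P_0\times B$, since $P_0\neq A$), yielding the required $P_0'\subseteq P'$ over $P_0$. The only delicate point is ruling out the mixed-type contractions, which is handled once and for all by the incomparability observation; after that the proof is essentially bookkeeping.
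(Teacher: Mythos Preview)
Your proof is correct and follows essentially the same approach as the paper's: a case analysis on whether the relevant prime in the product is of the form $P\times B$ or $A\times Q$ (the paper splits on the prime in $C\times B$, you split on $\mathfrak p_2$ in $A\times B$, but these determine each other), reducing the first case to going down for $A\subseteq C$ and handling the second case trivially. Your explicit remark on incomparability of mixed-type primes makes transparent a point the paper leaves implicit, but otherwise the arguments coincide.
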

\begin{proof}
$(\Rightarrow)$ Let $P\subseteq Q$ with  $P \in Spec (A\times B)$ and $Q \in Spec (C\times B)$. We examine the two cases: $(1)$ $Q=Q_1\times B$ with $Q_1\in Spec(C)$ and $(2)$ $Q=C\times M_1$ with $M_1\in Spec(B).$
In case 1, we get $P=P_1\times B$ with $P_1\in Spec(A)$, so $P_1\subseteq Q_1$. As going down holds for $A\subseteq C$, there exists a prime ideal $Q_2\subseteq Q_1$ of $C$ with $Q_2\cap A=P_1$. So $Q_2\times B\subseteq Q$ lies over $P$. In case 2, we get $P=A\times M_2$ with $M_2\in Spec(B)$, so $C\times M_2\subseteq Q$ lies over $P$.

$(\Leftarrow)$ Let $Q \in Spec (C)$ and $P \in Spec (A)$ with $P\subseteq Q$.
As going down holds for $A\times B\subseteq C\times B$, there exists a prime ideal $Q_1\times B\subseteq Q\times B$ of $C\times B$ lying over $P\times B$, so $Q_1\subseteq Q$ is a prime of $C$ lying over $P$.
\end{proof}

\begin{theorem}\label{8}
Let $A$ and $B$ be  rings. Then  $A\times B$ is perinormal if and only $A$ and $B$ are perinormal.
\end{theorem}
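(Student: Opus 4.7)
My plan is to exploit the two idempotents $(1,0)$ and $(0,1)$ to reduce everything to statements about the factors. The first step is to observe that the total quotient ring of $A\times B$ is $K_A\times K_B$ (since a pair is regular iff each component is regular) and that any overring $R$ of $A\times B$ inside $K_A\times K_B$ splits: since $(1,0)\in R$ is idempotent, $R=(1,0)R\times(0,1)R$, and $(1,0)R$ is an overring of $A$ while $(0,1)R$ is an overring of $B$. Hence every overring of $A\times B$ has the form $C\times D$ for overrings $A\subseteq C\subseteq K_A$ and $B\subseteq D\subseteq K_B$.

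For the forward implication, assume $A\times B$ is perinormal and let $C$ be a GD-overring of $A$. By Lemma \ref{7}, the extension $A\times B\subseteq C\times B$ satisfies going down, and $C\times B$ is an overring of $A\times B$ by the observation above. Perinormality of $A\times B$ gives that $A\times B\subseteq C\times B$ is flat, and Proposition \ref{14} then forces $A\subseteq C$ to be flat. Thus $A$ is perinormal, and by symmetry so is $B$.

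For the converse, assume $A$ and $B$ are perinormal and let $R=C\times D$ be a GD-overring of $A\times B$. The key step is to deduce that both $A\subseteq C$ and $B\subseteq D$ satisfy going down. Given primes $p_1\subseteq q_1$ of $A$ and a prime $q_2$ of $C$ lying over $q_1$, consider $p_1\times B\subseteq q_1\times B$ in $A\times B$ and $q_2\times D$ in $C\times D$; the latter lies over the former. Going down for $A\times B\subseteq C\times D$ produces a prime $P_2\subseteq q_2\times D$ of $C\times D$ contracting to $p_1\times B$. Since any prime of $C\times D$ contained in $q_2\times D$ must itself be of the form $p_2\times D$ (a prime of the shape $C\times q'$ would force $C\subseteq q_2$), we get $p_2\subseteq q_2$ in $\Spec(C)$ with $p_2\cap A=p_1$, as needed. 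The same argument handles $B\subseteq D$.

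Once both $A\subseteq C$ and $B\subseteq D$ are known to satisfy going down, perinormality of $A$ and $B$ makes them flat, and Proposition \ref{14} yields flatness of $A\times B\subseteq C\times D$. I do not anticipate a real obstacle: the only subtle point is the splitting of overrings via idempotents and the verification that a prime contained in $q_2\times D$ cannot be of the ``other'' type, both of which are routine.
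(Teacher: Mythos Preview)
Your forward implication is identical to the paper's. For the converse, however, the paper takes a different route: rather than splitting an arbitrary GD-overring of $A\times B$ and pushing going down to each factor, it invokes the local criterion (Proposition~\ref{4}) and checks perinormality of $(A\times B)_{(M)}$ at each maximal ideal $M$. Writing $M=P\times B$, the paper computes this regular localization as $A_{(P)}\times L$ with $L$ the total quotient ring of $B$; since $L$ has no proper overrings, every overring of $A_{(P)}\times L$ is automatically of the form $D\times L$, and now Lemma~\ref{7} applies verbatim (same second factor on both sides) to conclude that $D$ is a GD-overring of $A_{(P)}$.

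Your argument is more direct: you avoid Proposition~\ref{4} entirely, at the cost of reproving (a mild generalization of) Lemma~\ref{7} to handle $A\times B\subseteq C\times D$ with possibly $D\neq B$. The paper instead first trivializes one factor by localizing, so that Lemma~\ref{7} can be quoted as stated. Both arguments are short and correct; yours is self-contained, while the paper's leans on the local machinery already developed.
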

\begin{proof}
$(\Rightarrow)$ We show that $A$ is perinormal. Let $C$ be a GD-overring of $A$. By Lemma \ref{7}, $A \times B \subseteq C \times B$ satisfies going down. Since $A\times B$ is perinormal, $A \times B\subseteq C \times B$ is flat, so $A\subseteq C$ is flat by Lemma \ref{14}. Hence $A$ is perinormal.

$(\Leftarrow)$  Let $M$ be a prime ideal of $A\times B$.  By symmetry, we may assume that  $M=P\times B$ with $P\in Spec(A)$. By Proposition \ref{4}, it suffices to show that $(A\times B)_{(P\times B)}=A_{(P)}\times L$ is perinormal, where $L$ is the total quotient ring of $A$. Let $C$ be a GD-overring of $A_{(P)}\times L$. It is easy to see that $C=D\times L$ where $D$ is an overring of $A_{(P)}$. By Lemma \ref{7}, $D$ is a GD-overring of $A_{(P)}$. As $A_{(P)}$ is perinormal, we get that $A_{(P)}\subseteq D$ and hence $A_{(P)}\times L\subseteq C$ are both flat, cf. Lemma \ref{14}.
\end{proof}

Recall  that a domain $A$ is a  {\em Krull domain}  if $A=\cap_{P\in X^{1}(A)} A_P$, this intersection has finite character and $A_P$ is  a discrete valuation domain for each $P\in X^{1}(A)$ (see for instance \cite[Section 43]{G}). By \cite[Theorem 3.10]{ES}, a Krull domain is perinormal.

\begin{corollary}\label{27}
The integral closure $A^\prime$ of a reduced Noetherian ring $A$ is  perinormal.
\end{corollary}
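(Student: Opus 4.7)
My plan is to reduce the problem to the domain case via a product decomposition of $A'$ and Theorem \ref{8}. Since $A$ is a reduced Noetherian ring, it has only finitely many minimal primes $P_1,\ldots,P_n$, and a standard computation shows that the canonical map from the total quotient ring $K$ of $A$ to $\prod_{i=1}^n K_i$, with $K_i := \mathrm{Frac}(A/P_i)$, is an isomorphism (the set of zero-divisors of $A$ is exactly $\bigcup_i P_i$).

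First I would establish the key identification $A' = \prod_{i=1}^n A_i'$, where $A_i'$ denotes the integral closure of the Noetherian domain $A/P_i$ in $K_i$. The inclusion $A' \subseteq \prod A_i'$ is obtained by projecting any monic integral relation coordinatewise. For the reverse inclusion, the idempotents $e_i = (0,\ldots,1,\ldots,0) \in K$ satisfy $e_i^2 = e_i$ and therefore lie in $A'$; writing any element of $\prod(A/P_i)$ in the form $\sum_i a_i e_i$ with $a_i \in A$ shows that $\prod(A/P_i) \subseteq A'$. Since $\prod A_i'$ is integral over $\prod(A/P_i)$, hence over $A'$, and since $A'$ is integrally closed in $K$, one concludes $\prod A_i' \subseteq A'$.

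With the decomposition in hand, Theorem \ref{8} reduces the task to showing that each $A_i'$ is a perinormal domain. But $A/P_i$ is a Noetherian domain, so by the classical Mori--Nagata theorem its integral closure $A_i'$ in $K_i$ is a Krull domain; then \cite[Theorem 3.10]{ES}, recalled in the paragraph immediately preceding the corollary, yields that $A_i'$ is perinormal.

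The main obstacle is the verification of the product decomposition $A' = \prod A_i'$, which relies on producing the idempotents of $K$ inside $A'$; once this is in place, Theorem \ref{8}, Mori--Nagata, and the ``Krull implies perinormal'' result from \cite{ES} combine at once to give the conclusion.
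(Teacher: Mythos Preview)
Your proof is correct and follows essentially the same route as the paper: decompose $A'$ as a finite direct product of Krull domains, then apply Theorem~\ref{8} together with \cite[Theorem 3.10]{ES}. The only difference is cosmetic: where you spell out the product decomposition via idempotents and invoke Mori--Nagata, the paper simply cites \cite[Theorem 10.1]{H} for the fact that $A'$ is a finite direct product of Krull domains.
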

\begin{proof}
By \cite[Theorem 10.1]{H}, $A^\prime$ is finite direct product of  Krull domains. Apply Theorem \ref{8} and \cite[Theorem 3.10]{ES}.
\end{proof}

\noindent The preceding result holds also for non-reduced Noetherian rings, cf. Corollary \ref{20}.

\section{P-rings and Krull rings}

%@4

By \cite[Theorem 3.10]{ES}, a Krull domain is perinormal. This result was extended in \cite[Theorem 2]{DR} using the concept of a P-domain.
According  to \cite[page 2]{MZ}, a domain $A$ is called a {\em P-domain} if $A_P$ is a valuation domain for every  prime ideal  $P$ which is minimal over an ideal of the form $Aa:b$ with $a,b\in A-\{0\}$.
In  \cite[Theorem 2]{DR},  we proved that every P-domain  is perinormal. In particular a GCD domain or a Krull domain is perinormal. The aim of this section is to extend \cite[Theorem 2]{DR} to rings with zero-divisors.

Recall  that a {\em Marot ring} is a ring whose regular ideals are generated by regular elements, cf. \cite[page 31]{H}. Each overring of a Marot ring is a Marot ring (cf. \cite[Corollary 7.3]{H}),  every Noetherian ring is Marot (cf. \cite[Theorem 7.2]{H}) and, if $A$ is Marot, then $A_{(P)} = A_{[P]}$ for each prime ideal $P$ of $A$ (cf. \cite[Theorem 7.6]{H}).

{\em For the rest of this paper, we assume that  all rings are Marot rings}.
\\[1mm]
Let $A$ be a ring with total quotient ring $K$ and $M$ an $A$-module.
Recall  that a prime ideal $P$ of  $A$ is called {\em weakly  Bourbaki associated prime} to $M$ \cite[page 289]{B}, if $P$ is minimal over the annihilator ideal $Ann(x)$ for some $x\in M$. Denote by $Ass(M)$ the set of weakly Bourbaki associated primes of $M$. In particular, for the factor $A$-module $K/A$ we have
$Ass(K/A)=\{P\in Spec(A)\mid$  there exist $a\in A$, $b\in Reg(A)$ such that $P$ is minimal over  $bA:a\}$. Note that each $P\in Ass(K/A)$ is regular because $bA:a$ contains the regular element $b$.
The following fact is well-known for domains.

\begin{proposition}\label{18}
Let $A$ be a  ring with total quotient ring $K$. Then

$$A=\bigcap\limits _{P\in Ass(K/A)}A_{(P)}.$$
\end{proposition}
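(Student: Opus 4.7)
The plan is to prove the two inclusions, with only one direction being nontrivial. The inclusion $A\subseteq \bigcap_{P\in Ass(K/A)} A_{(P)}$ is immediate, since by construction $A$ sits inside each $A_{(P)}$.

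For the reverse inclusion, I would take $x$ in the intersection and write $x=a/b$ with $a\in A$ and $b\in Reg(A)$ (possible because $x\in K$). The goal is to show $x\in A$, i.e.\ that the conductor ideal $I:=bA:_A a=\{r\in A\mid ra\in bA\}$ equals the whole ring $A$. Suppose for contradiction that $I\subsetneq A$. Observe that $b\in I$ (since $ba\in bA$ trivially), so $I$ is a regular ideal, and as a proper ideal it is contained in some prime, hence (by Zorn's lemma applied to primes containing $I$) admits a minimal prime $P$. By the very definition, $P\in Ass(K/A)$.

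Now I would use the Marot hypothesis to convert membership in $A_{(P)}$ into a usable statement: since $A$ is Marot, the paper recalled that $A_{(P)}=A_{[P]}$. By hypothesis $x\in A_{(P)}=A_{[P]}$, so there exists $s\in A-P$ with $sx\in A$; equivalently $sa\in bA$, i.e.\ $s\in I\subseteq P$, contradicting the choice of $s$. This forces $I=A$ and hence $x\in A$.

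I do not anticipate any real obstacle here: the argument is the standard conductor trick used in the domain case, and the only subtlety is that in the presence of zero-divisors one must (i) insist that $b$ be regular so that $I$ is a regular ideal and $P$ is automatically regular, and (ii) use the Marot hypothesis to replace $A_{(P)}$ by the large quotient ring $A_{[P]}$, which is what gives the element $s\in A-P$ with $sx\in A$.
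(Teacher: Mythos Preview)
Your proof is correct and follows essentially the same route as the paper: assume $a/b\notin A$, take a minimal prime $P$ over the proper conductor ideal $bA:a$, note $P\in Ass(K/A)$, and derive a contradiction from $a/b\in A_{(P)}$. One small remark: the detour through $A_{(P)}=A_{[P]}$ is unnecessary for this direction, since from $a/b\in A_{(P)}$ you already get $a/b=c/s$ with $s\in (A-P)\cap Reg(A)$, hence $sa=bc$ and $s\in (bA:a)\setminus P$; the Marot hypothesis is only needed for the converse implication (as the paper's Remark~\ref{19} makes explicit).
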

\begin{proof}
By contrary, assume there exist $a\in A$ and $b\in Reg(A)$ such that $a/b\in \bigcap\limits _{P\in Ass(K/A)}A_{(P)}-A$. Then $bA:a$ is a proper ideal and, if we pick  a minimal prime ideal $Q$ of $bA:a$, then $Q\in Ass(K/A)$.
By Remark \ref{19}, $a/b\notin A_{(Q)}$ which is a contradiction. The other inclusion is obvious.
\end{proof}

\begin{remark}\label{19}
Let $A$ be a ring, $a\in A$, $b\in Reg(A)$ and $P\in Spec(A)$. Clearly, if $a/b\in A_{(P)}$, then  $bA:a\not\subseteq P$. Conversely, if $bA:a\not\subseteq P$, there exists some regular element in $(bA:a)- P$, so
$a/b\in A_{(P)}$, because $A$ is a Marot ring.
\end{remark}

%@5

We briefly recall the definition of a (Manis) valuation ring. For details we refer the reader to \cite[Chapter II]{H}.  Let $K$ be a ring. A {\em valuation} on $K$ is a surjective map $v:K\rightarrow G\cup \{\infty\}$,
where $(G,+)$ is a totally ordered abelian group, such that

$(a)$ $v(xy)=v(x)+v(y)$ for all $x,y\in K$,

$(b)$ $v(x+y)\geq min \{v(x),v(y)\}$ for all $x,y\in K$,

$(c)$ $v(1)=0$ and $v(0)=\infty$.

\begin{theorem}{\em (\cite[Theorem 5.1]{H})} \label{10}
Let $A$ be a ring with total quotient ring $K$ and let $M$ be a prime ideal of $A$. Then the following conditions are equivalent.

$(a)$ If $B$ is an overring of $A$ having  a prime ideal $N$ such that $N\cap A=M$, then $A=B$.

$(b)$ If $x\in K-A$, there exists $y\in M$ such that $xy\in A-M$.

$(c)$ There exists a valuation $v$ on $K$ such that
$A=\{ x\in K\mid v(x)\geq(0)\}$ and $M=\{ x\in K\mid v(x)>0\}$.
\end{theorem}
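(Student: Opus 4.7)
The plan is to establish the cycle (a) $\Rightarrow$ (b) $\Rightarrow$ (c) $\Rightarrow$ (a), which is the standard Manis pattern.

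For (a) $\Rightarrow$ (b) I argue contrapositively. Suppose there is $x\in K\setminus A$ such that no $y\in M$ satisfies $xy\in A\setminus M$; equivalently, $xM\cap A\subseteq M$. Put $B:=A[x]$; since $x\notin A$, $B\supsetneq A$. I claim the ideal $J:=MB$ is proper: a putative relation $1=m_0+m_1x+\cdots+m_nx^n$ with $m_i\in M$ can be rearranged as $1-m_0=m_1x+\cdots+m_nx^n$, and since $1-m_0\in A\setminus M$, iterated multiplication by $x$ coupled with the hypothesis $xM\cap A\subseteq M$ forces each $m_ix^i\in M$ after a short induction on $n$, a contradiction. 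Pick a prime $N$ of $B$ minimal over $J$; then $N\cap A$ is a prime containing $M$, and one checks using the hypothesis on $x$ that no element of $A\setminus M$ lies in $N$, so $N\cap A=M$, contradicting (a).

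For (c) $\Rightarrow$ (a), let $B$ be an overring of $A$ with prime $N$ satisfying $N\cap A=M$, and pick $b\in B$. If $v(b)<0$, by surjectivity of $v$ choose $m\in K$ with $v(m)=-v(b)>0$; then $m\in M\subseteq N$ and $v(mb)=0$, so $mb\in A\setminus M$. But $mb\in N$ since $m\in N$, forcing $mb\in N\cap A=M$, a contradiction. Hence $v(b)\geq 0$ and $b\in A$, proving $A=B$.

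The main obstacle is (b) $\Rightarrow$ (c), which requires constructing the valuation. Set $U:=A\setminus M$, which is a multiplicatively closed subset thanks to $M$ being prime. On $K$ define the preorder $x\preceq y$ iff $y\in xA+M\cdot \{z\in K : zx\in A\}$, or, more concretely, via the equivalence $x\sim y$ iff $xu=yu'$ for some $u,u'\in U$. Let $G$ be the quotient monoid $K/\!\sim$ modulo the class of $0$, with the class of $1$ as identity. The crux is to verify, using (b), that $\preceq$ descends to a \emph{total} ordering on $G$ and that $G$ is in fact an abelian group: given any $x\in K$, if $x\notin A$ then (b) furnishes $y\in M$ with $xy\in A\setminus M=U$, so $[x]$ admits an inverse, while if $x\in A\setminus M$ then $[x]$ is the identity; a short case analysis shows this makes $G$ a totally ordered abelian group. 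Define $v\colon K\to G\cup\{\infty\}$ by $v(x)=[x]$ (with $v(0)=\infty$); axioms (a) and (c) of a valuation are immediate, and axiom (b) follows from $x+y\in xA+yA$. Finally, $v(x)\geq 0$ iff $x\in A$ and $v(x)>0$ iff $x\in M$, using (b) to handle the nontrivial direction. The delicate point throughout is that, lacking cancellation in $K$, one must repeatedly invoke hypothesis (b) to produce the element of $U$ needed for every group-theoretic operation, which is where the Marot/regularity assumptions in the surrounding text play their role implicitly.
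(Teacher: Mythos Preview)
The paper does not prove this theorem at all: it is quoted verbatim as \cite[Theorem~5.1]{H} and used as a black box, so there is no ``paper's own proof'' to compare against. That said, your outline follows the classical Manis pattern and the implication (c)~$\Rightarrow$~(a) is clean and correct.

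There is, however, a genuine gap in your (a)~$\Rightarrow$~(b). From the failure of (b) you only obtain a single $x\in K\setminus A$ with $xM\cap A\subseteq M$; this hypothesis says nothing about $x^2,x^3,\dots$, and your ``short induction'' showing $1\notin MA[x]$ breaks down already at degree~$2$. Concretely, from $1-m_0=x(m_1+m_2x+\cdots+m_nx^{n-1})$ you cannot apply the hypothesis because $m_1+m_2x+\cdots+m_nx^{n-1}$ need not lie in $M$ (or even in $A$), and the top coefficient $m_nx$ need not lie in $A$ either, so you cannot absorb it into $m_{n-1}$. Moreover, even granting that $MA[x]$ is proper, you assert without argument that a minimal prime $N$ over $MA[x]$ satisfies $N\cap A=M$; a priori $N\cap A$ could be strictly larger than $M$, and ``one checks'' is not a proof here. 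The standard route avoids both problems: one shows via Zorn's lemma that among all pairs $(B,N)$ with $A\subseteq B\subseteq K$, $N\in\mathrm{Spec}(B)$ and $N\cap A=M$, a maximal one exists and automatically satisfies (b); condition (a) then forces $(A,M)$ itself to be that maximal pair.

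Your (b)~$\Rightarrow$~(c) has the right shape but is too loosely written to stand on its own: you give two inequivalent-looking descriptions of the preorder, never verify that $\sim$ is compatible with multiplication, and do not check totality of the order or the ultrametric inequality beyond a one-line gesture. In a ring with zero-divisors these verifications are where the content lies (and the claim that ``Marot/regularity assumptions \dots\ play their role implicitly'' is misleading: Theorem~\ref{10} in Huckaba holds for arbitrary rings, with no Marot hypothesis). If you want a self-contained argument rather than a citation, this step needs to be written out in full.
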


A pair $(A,M)$ satisfying the equivalent conditions of Theorem \ref{10} is called a {\em valuation pair} and $A$ is called a {\em valuation ring} (on $K$). By \cite[Corollary 10.5]{LM}, $M$ is uniquely determined by $A$ if $A\neq K$.
If $G$ is the group of integers, then $A$ is called a {\em discrete valuation ring (DVR)}.

\begin{definition}
$(a)$ Let $A$ be a  ring and $P$ a regular prime ideal of $A$. We  say that $P$ is {\em valued} if $(A_{(P)},PA_{(P)})$ is a valuation pair.

$(b)$ A ring $A$ is called an {\em essential ring} if $A=\bigcap_{P\in G}A_{(P)}$ where $G$ is a set of  valued primes of $A$.

$(c)$ A ring $A$ is called a {\em $P$-ring} if every $Q\in Ass_A(K/A)$ is a valued prime.
\end{definition}

The following result extends \cite[Proposition 1.1]{MZ} for rings with zero-divisors.

\begin{theorem}\label{12}
Let $A$ be a  ring.

$(a)$ If $A$ is a P-ring, then $A$ is essential.

$(b)$ If $A$ is a P-ring and $S\subseteq Reg(A)$ a multiplicative set, then
 $A_S$ is a P-ring.

$(c)$ If $A_{(M)}$ is an essential ring for each $M\in Ass(K/A)$, then $A$ is a P-ring.
\end{theorem}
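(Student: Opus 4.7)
Part (a) is essentially immediate from Proposition \ref{18}: since every $P \in Ass(K/A)$ is valued by the $P$-ring hypothesis, the identity $A = \bigcap_{P \in Ass(K/A)} A_{(P)}$ provided by Proposition \ref{18} exhibits $A$ as an essential ring, taking $G = Ass(K/A)$.

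For part (b) the plan is to set up the dictionary $Ass(K/A_S) = \{PA_S : P \in Ass(K/A),\ P \cap S = \emptyset\}$ together with the localization identity $(A_S)_{(PA_S)} = A_{(P)}$. Given $R' \in Ass(K/A_S)$, minimal over $b' A_S : a'$ with $b' \in Reg(A_S)$, I would write the relevant element of $K$ as $a/b$ with $a \in A$, $b \in Reg(A)$; a short computation using $S \subseteq Reg(A)$ yields $b' A_S : a' = (bA:a) A_S$, so $R' = RA_S$ for some $R$ minimal over $bA:a$ with $R \cap S = \emptyset$, i.e., $R \in Ass(K/A)$. The identity $(A_S)_{(RA_S)} = A_{(R)}$ then follows from the Marot identity $A_{(R)} = A_{[R]}$ (\cite[Theorem 7.6]{H}) together with the chain $A \subseteq A_S \subseteq A_{(R)}$, which is valid because $S \subseteq (A - R) \cap Reg(A)$. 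Since $A$ is a $P$-ring, $A_{(R)}$ is a valuation ring on $K$, so $RA_S$ is valued in $A_S$, proving that $A_S$ is a $P$-ring.

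For part (c), let $Q \in Ass(K/A)$ be minimal over $bA:a$ with $a \in A$, $b \in Reg(A)$; we must show $Q$ is valued. Apply the hypothesis at $M = Q$: $A_{(Q)}$ is essential, so $A_{(Q)} = \bigcap_\alpha (A_{(Q)})_{(P_\alpha)}$ for some valued primes $P_\alpha$ of $A_{(Q)}$. The minimality of $Q$ over $bA:a$ in $A$ transports to show that $QA_{(Q)}$ is itself minimal over $bA_{(Q)}:a = (bA:a) A_{(Q)}$. Since $a/b \notin A_{(Q)}$ by Remark \ref{19}, there is some $\alpha$ with $a/b \notin (A_{(Q)})_{(P_\alpha)}$, and Remark \ref{19} applied inside $A_{(Q)}$ gives $bA_{(Q)}:a \subseteq P_\alpha$. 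The heart of the argument is the identification $P_\alpha = QA_{(Q)}$. The key Marot-based observation is that any $y \in A$ regular in $A_{(Q)}$ is already regular in $A$: indeed, if $yz = 0$ in $A$ then $z/1 = 0$ in $A_{(Q)}$, so $zt = 0$ in $A$ for some $t \in S \subseteq Reg(A)$, and regularity of $t$ gives $z = 0$. Consequently, every regular prime of $A_{(Q)}$ lifts to a regular prime of $A$ disjoint from $S$, and the Marot property of $A$ forces this lift to lie in $Q$. Applied to the valued (hence regular) prime $P_\alpha$, this yields $P_\alpha \subseteq QA_{(Q)}$; combining with $bA_{(Q)}:a \subseteq P_\alpha$ and the minimality of $QA_{(Q)}$ over $bA_{(Q)}:a$ forces $P_\alpha = QA_{(Q)}$, so $QA_{(Q)}$ is valued in $A_{(Q)}$. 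Finally, any regular element of $A_{(Q)}$ outside $QA_{(Q)}$ is a unit (its principal ideal is regular, hence would otherwise sit inside the unique regular maximal), so $(A_{(Q)})_{(QA_{(Q)})} = A_{(Q)}$ is a valuation ring on $K$, which is to say $Q$ is valued. The main obstacle is precisely this identification $P_\alpha = QA_{(Q)}$, which synthesises the Marot structure of $A_{(Q)}$, the transport of minimality from $A$ to $A_{(Q)}$, and the reverse use of that minimality to upgrade an inclusion to an equality.
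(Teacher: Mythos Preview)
Your proposal is correct and follows essentially the same approach as the paper: part (a) is the immediate consequence of Proposition~\ref{18}, part (b) is the associated-prime correspondence under regular localization (which the paper cites from \cite[page 289]{B} and you spell out), and part (c) is the same ``pick a valued prime missing $a/b$, squeeze it between $bA:a$ and the given prime, and use minimality'' argument. The only cosmetic difference is that in (c) the paper contracts the valued prime $N$ of $A_{(M)}$ back to a prime $Q$ of $A$ and applies minimality in $A$ (using $B_{(N)}=A_{(Q)}$), whereas you stay inside $A_{(Q)}$ and use minimality of $QA_{(Q)}$ there, which then forces your extra verification that $(A_{(Q)})_{(QA_{(Q)})}=A_{(Q)}$; both routes are equivalent.
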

\begin{proof}
$(a)$ follows from Proposition \ref{18}.
$(b)$ Set $B:=A_{S}$ and let $Q\in Ass_{B}(K/B)$. By \cite [page 289]{B},  $Q=PB$ for some $P\in Ass_{A}(K/A)$ disjoint of $S$. Hence $B_{(Q)}=B_{(PB)}=A_{(P)}$ is a valuation ring.

$(c)$ Suppose that  $M\in Ass_{A}(K/A)$ and set $B=A_{(M)}$. Then $M$ is a  minimal prime over $bA:a$ for some $a\in A$ and  $b\in Reg(A)$, so $a/b\not\in  B$ by Remark \ref{19}.  Since $B$ is essential, there exists a valued prime $N$ of $B$ such that $a/b\notin B_{(N)}$. As $A$ is a Marot ring, $N=QB$ for some $Q\in Spec(A)$ contained in $M$.   Since $B_{(N)}=A_{(Q)}$ is a valuation ring, it follows that $Q$ is a valued prime. From $a/b\notin B_{(N)}$ and Remark \ref{19}, we get $bA:a\subseteq Q\subseteq M$, hence $M=Q$ is a valued prime, because $M$ is minimal  over $bA:a$.
\end{proof}

\begin{remark}\label{25}
Let $A$ be a ring with total quotient ring $K$ and let $B$ be an overring of $A$. It is well-known that the contraction map $Spec(B)\rightarrow Spec(A)$ induces a one-to-one correspondence between the non-regular prime ideals of $B$ and the non-regular prime ideals of $A$ whose inverse is given by $P\mapsto PK\cap B$ (see \cite[page 2]{H}). Consequently, the contraction map $Spec(B)\rightarrow Spec(A)$ is surjective if and only if every regular prime ideal of $A$ is lain over by some (necessarily regular) prime ideal of $A$.
\end{remark}

The following result is probably well-known.

\begin{proposition}\label{9}
Let $A$ be a ring, $B$ a GD-overring of $A$, $Q\in Spec (B)$ and $P:=Q\cap A$. Then the contraction map ${Spec} (B_{(Q)})\rightarrow Spec (A_{(P)})$ is surjective.
\end{proposition}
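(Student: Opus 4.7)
The plan is to lift each $\mathfrak{p}\in \mathrm{Spec}(A_{(P)})$ to a prime $P_0$ of $A$, produce a preimage in $\mathrm{Spec}(B)$, and push it down to $B_{(Q)}$. Set $S=(A-P)\cap Reg(A)$ and $T=(B-Q)\cap Reg(B)$, so that $A_{(P)}=A_S$ and $B_{(Q)}=B_T$. Each $s\in S$ is a unit in $K\supseteq B$ (hence regular in $B$); since $s\in A$ and $s\notin P=Q\cap A$, also $s\notin Q$, so $S\subseteq T$. This makes $A_{(P)}\hookrightarrow B_{(Q)}$ well-defined and identifies the contraction map in the statement. Setting $P_0:=\mathfrak{p}\cap A$ gives $\mathfrak{p}=P_0A_{(P)}$ and $P_0\cap Reg(A)\subseteq P$. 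I would now split on whether $P_0$ is regular.

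If $P_0$ is regular, the Marot hypothesis on $A$ gives that $P_0$ is generated by $P_0\cap Reg(A)$, and since this set lies in $P$, we conclude $P_0\subseteq P$. Going down for $A\subseteq B$ applied to $Q$ over $P$ then yields $Q_0\subseteq Q$ in $\mathrm{Spec}(B)$ with $Q_0\cap A=P_0$. As $Q_0\subseteq Q$, no element of $Q_0$ lies in $B-Q$, so $Q_0\cap T=\emptyset$ and $\mathfrak{q}:=Q_0B_{(Q)}$ is a prime of $B_{(Q)}$ whose contraction to $A_{(P)}$ is $P_0A_{(P)}=\mathfrak{p}$.

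If $P_0$ is non-regular, going down cannot be invoked, since typically $P_0\not\subseteq P$. Instead, Remark \ref{25} provides a (unique) non-regular prime $Q_0:=P_0K\cap B$ of $B$ lying over $P_0$. Being non-regular, $Q_0$ meets no element of $Reg(B)\supseteq T$, so $Q_0\cap T=\emptyset$ automatically, and $\mathfrak{q}:=Q_0B_{(Q)}$ once again contracts to $\mathfrak{p}$. The main subtlety, rather than a genuine obstacle, is recognising that $\mathfrak{p}$ is not forced to sit under $PA_{(P)}$ (so $A_{(P)}$ need not be local with maximal ideal $PA_{(P)}$): this is precisely why going down alone is insufficient, and why Remark \ref{25} together with the Marot hypothesis are needed to absorb the non-regular primes.
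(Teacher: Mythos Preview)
Your proof is correct and follows essentially the same approach as the paper: both use the Marot hypothesis to deduce $P_0\subseteq P$ from $P_0\cap Reg(A)\subseteq P$ in the regular case, apply going down to lift, and invoke Remark~\ref{25} to dispose of the non-regular primes. The only cosmetic difference is that the paper first reduces to $Q$ regular and then applies Remark~\ref{25} at the level of $A_{(P)}\subseteq B_{(Q)}$ to restrict attention to regular primes of $A_{(P)}$, whereas you keep both cases and apply Remark~\ref{25} at the level of $A\subseteq B$.
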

\begin{proof}
We may assume that $Q$ is regular, otherwise $A_{(P)}=B_{(Q)}$ is the total quotient ring of $A$.
By Remark \ref{25}, it suffices consider a regular prime ideal $N$ of $A_{(P)}$.  Note that $M:=N\cap A$ is regular.
Since $M$ is disjoint of $(A-P)\cap Reg(A)$, we get that every regular element of $M$ is in $P$, so $M\subseteq P$ because $A$ is a Marot ring. By going down, there is some $Q_1\in Spec (B)$ such that $Q_1\cap A = M$ and $Q_1\subseteq Q$. As $M$ is regular so is  $Q_1$. Thus $Q_1B_{(Q)}$ is a regular prime ideal of $B_{(Q)}$  that  lies over $N$.
\end{proof}

The next result extends \cite[Theorem 1]{DR} for rings with zero-divisors.

\begin{proposition}\label{15}
Let $A$ be an essential ring and $B$ an overring of $A$. If the contraction map  $Spec (B)\rightarrow Spec (A)$ is surjective, then $A=B$.
\end{proposition}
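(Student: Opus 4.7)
The plan is to prove $B \subseteq A$ by showing $B \subseteq A_{(P)}$ for each valued prime $P$ in a defining family for the essential ring $A$. Since $A$ is essential, fix a family $G$ of valued primes with $A = \bigcap_{P \in G} A_{(P)}$. For each $P \in G$, the pair $(A_{(P)}, PA_{(P)})$ is a valuation pair, and I would use the defining property (Theorem \ref{10}(a)) to collapse a suitable regular localization of $B$ onto $A_{(P)}$.

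First, I would fix $P \in G$. Because $P$ is valued it is regular, and by the surjectivity hypothesis there exists $Q \in \mathrm{Spec}(B)$ with $Q \cap A = P$. Since $Q$ contains the regular elements of $P$, $Q$ is a regular prime of $B$. Next I would observe $A_{(P)} \subseteq B_{(Q)}$: indeed, $S := (A \setminus P) \cap \mathrm{Reg}(A)$ sits inside $T := (B \setminus Q) \cap \mathrm{Reg}(B)$ because regular elements of $A$ remain regular in the overring $B$ (they are units in the common total quotient ring), and $A \setminus P \subseteq B \setminus Q$ follows from $Q \cap A = P$.

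Then I would verify the contraction identity $QB_{(Q)} \cap A_{(P)} = PA_{(P)}$. The inclusion $\supseteq$ is clear from $P \subseteq Q$. Conversely, if $x \in QB_{(Q)} \cap A_{(P)}$, write $x = a/s$ with $a \in A$, $s \in S$, and $x = y/t$ with $y \in Q$, $t \in T$; then $at = ys \in Q$, and since $t \notin Q$ we obtain $a \in Q \cap A = P$, so $x \in PA_{(P)}$. Here I am leaning on the fact that $A$ is Marot (used also to pass between $A_{(P)}$ and $A_{[P]}$ implicitly, as in the description of elements of $A_{(P)}$). With this identity in hand, Theorem \ref{10}(a) applied to the valuation pair $(A_{(P)}, PA_{(P)})$ and the overring $B_{(Q)}$ with prime ideal $QB_{(Q)}$ yields $A_{(P)} = B_{(Q)}$.

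Since $B \subseteq B_{(Q)} = A_{(P)}$ for every $P \in G$, intersecting over $G$ gives $B \subseteq \bigcap_{P \in G} A_{(P)} = A$, so $A = B$. The main obstacle is the contraction calculation $QB_{(Q)} \cap A_{(P)} = PA_{(P)}$: one has to be careful that elements of $QB_{(Q)}$ admit the expected presentation $y/t$ with $y \in Q$ (which uses that $T$ is disjoint from $Q$), and that contraction is computed in $A_{(P)}$ and not just $A$. Everything else is a straightforward application of the essential-ring decomposition together with the valuation-pair characterization in Theorem \ref{10}.
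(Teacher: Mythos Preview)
Your proposal is correct and follows essentially the same route as the paper: pick $Q$ over each valued prime $P$, use the valuation-pair property (Theorem \ref{10}(a)) to force $B_{(Q)}=A_{(P)}$, and intersect. The only difference is that you spell out the inclusion $A_{(P)}\subseteq B_{(Q)}$ and the contraction $QB_{(Q)}\cap A_{(P)}=PA_{(P)}$ in detail, whereas the paper simply asserts that $QB_{(Q)}$ lies over $PA_{(P)}$ and invokes Theorem \ref{10}.
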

\begin{proof}
Let $G$ be the set of valued primes of $A$. Let $P\in G$ and pick $Q\in Spec(B)$ such that $Q\cap A=P$. Since $A_{(P)}\subseteq B_{(Q)}$,  $(A_{(P)}, PA_{(P)})$ is a valuation pair and $QB_{(Q)}$ lies over $PA_{(P)}$, we obtain $B\subseteq B_{(Q)}=A_{(P)}$, cf. Theorem \ref{10}. Since $A$ is essential, we get $B\subseteq \cap_{P\in G}A_P=A$, so $A=B$.
\end{proof}

The next result extends \cite[Theorem 2]{DR} for rings with zero-divisors.

\begin{theorem}\label{23}
Every  $P$-ring $A$ is perinormal.
\end{theorem}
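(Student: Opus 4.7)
The plan is to invoke Lemma \ref{2}(a): for every maximal ideal $N$ of $B$ with $M := N \cap A$, one must establish $A_{[M]} = B_{[N]}$. Because $A$ and its overring $B$ are both Marot, this reduces to the equality $A_{(M)} = B_{(N)}$.

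The first step is to confirm that $B_{(N)}$ is a genuine overring of $A_{(M)}$, i.e.\ that it sits inside the total quotient ring of $A_{(M)}$, which is $K$ itself. The inclusion $B_{(N)} \subseteq K$ amounts to showing that every $s \in Reg(B)$ is a unit of $K$: writing $s = c/d$ with $c \in A$ and $d \in Reg(A)$, any $a \in A$ annihilating $c$ would annihilate $s$ in $B$, forcing $c \in Reg(A)$, so $1/s = d/c \in K$. The second step localizes the $P$-ring hypothesis: by Theorem \ref{12}(b) applied with $S = (A - M) \cap Reg(A) \subseteq Reg(A)$, the ring $A_{(M)} = A_S$ is again a $P$-ring, hence essential by Theorem \ref{12}(a). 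The third step uses going-down: Proposition \ref{9} applied with $Q = N$ and $P = M$ gives surjectivity of the contraction map $\Spec(B_{(N)}) \to \Spec(A_{(M)})$.

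These three ingredients combine via Proposition \ref{15}: an essential ring coincides with any overring over which the prime contraction is surjective, so $A_{(M)} = B_{(N)}$, and flatness of $A \subseteq B$ follows from Lemma \ref{2}(a). The delicate point is the first step---checking that $B_{(N)}$ is legitimately an overring of $A_{(M)}$ inside $K$---since it rests on how regular elements propagate through the Marot overring $A \subseteq B$. The remainder is a faithful transposition of the domain argument of \cite{DR}, with Proposition \ref{15} and Theorem \ref{12} playing the roles of their domain-theoretic counterparts.
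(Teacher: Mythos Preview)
Your proof is correct and follows essentially the same route as the paper: pick a maximal ideal of $B$, contract to $A$, show the regular localization of $A$ is essential (via Theorem~\ref{12}), apply Proposition~\ref{9} for surjectivity on spectra, then invoke Proposition~\ref{15} and conclude flatness by Lemma~\ref{2}(a). The only cosmetic differences are that the paper explicitly splits off the case where the maximal ideal $N$ is non-regular (where $A_{(M)}=B_{(N)}=K$ trivially), and that your ``delicate first step'' verifying $B_{(N)}\subseteq K$ is taken for granted in the paper as a standard fact about overrings of Marot rings.
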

\begin{proof}
Denote by $K$ the total quotient ring of $A$.
Let  $B$ a GD-overring of $A$, $Q$ a maximal ideal of $B$ and $P:=Q\cap A$.
If $Q$ is non-regular, then so is $P$ and we get easily that $A_{(P)}=K=B_{(Q)}$. Assume that $Q$ and hence $P$ are regular ideals.
 By Theorem \ref{12} and Proposition \ref{9}, $A_{(P)}$ is an essential ring and the contraction map ${Spec} (B_{(Q)})\rightarrow Spec (A_{(P)})$ is surjective.
By Proposition \ref{15}, we get $A_{(P)}=B_{(Q)}$.
Thus $A\subseteq B$ is flat due to part $(a)$ of Lemma \ref{2}.
\end{proof}

Say that a  prime ideal of a ring is {\em minimal regular} if it is minimal in the set of regular primes (see \cite[page 40]{H}).
Let $A$ be a ring and ${H}$ be the set of minimal regular prime ideals of $A$. Recall from \cite[Section 7]{Md} that   $A$ is a  {\em Krull ring}  if

$(1)$ $A=\cap_{P\in {H}} A_{(P)}$,

$(2)$ each  $x\in Reg(A)$ belongs to finitely many prime ideals $P\in H$,

$(3)$ $A_{(P)}$ is a DVR  for each $P\in {H}$.
\\[2mm]
Clearly a Krull ring is essential. By \cite[Theorem 10.1]{H}, the integral closure of a Noetherian ring is a Krull ring.
By \cite[Theorem 8.2]{H}, $A_{(P)}$ is a Krull ring whenever $A$ is a Krull ring and $P$ a prime ideal of $A$. So Theorem \ref{12} applies to show that a Krull ring is a P-ring. Thus we have the following corollary of
Theorem \ref{23} which extends the Krull part of \cite[Theorem 3.10]{ES}.

\begin{corollary}\label{20}
Every Krull ring is perinormal. In particular, the  integral closure of a Noetherian ring is perinormal.
\end{corollary}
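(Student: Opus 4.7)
The plan is to chain together the three facts the paragraph preceding the corollary already assembles, and simply verify the single missing step — that a Krull ring is essential — which is essentially immediate from the definition.

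First I would show that any Krull ring $A$ is essential in the sense defined earlier: conditions (1) and (3) in the definition of a Krull ring say exactly that $A = \bigcap_{P\in H} A_{(P)}$ and that each $A_{(P)}$ is a DVR. Since a DVR is a valuation ring on $K$ and its maximal ideal $PA_{(P)}$ makes $(A_{(P)}, PA_{(P)})$ a valuation pair, every $P\in H$ is a valued prime of $A$, so the intersection representation exhibits $A$ as an essential ring.

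Next I would upgrade this to the statement that $A$ is a P-ring by applying Theorem \ref{12}(c). The paragraph before the corollary cites \cite[Theorem 8.2]{H}, which guarantees that $A_{(P)}$ is again a Krull ring for every prime ideal $P$ of $A$. Combined with the previous paragraph, this tells us that $A_{(M)}$ is essential for every $M\in Ass(K/A)$. Theorem \ref{12}(c) then yields that $A$ is a P-ring. Theorem \ref{23} now gives that $A$ is perinormal.

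For the "in particular" clause, I would invoke \cite[Theorem 10.1]{H}, already quoted in the paper, which says the integral closure of a Noetherian ring is a Krull ring; the first part of the corollary then finishes the job. The only subtle point is making sure that the transition from "$A_{(P)}$ is a Krull ring" to "$A_{(P)}$ is essential" applies also when $P$ is non-regular — but in that case $A_{(P)} = K$ is the total quotient ring and is trivially essential (it is vacuously an intersection, or equivalently a zero-dimensional ring), so no genuine obstacle appears. The proof is therefore essentially a one-line assembly of Theorem \ref{12}(c), Theorem \ref{23}, and the two cited results from \cite{H}.
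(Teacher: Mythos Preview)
Your proposal is correct and follows exactly the argument the paper sketches in the paragraph preceding the corollary: Krull $\Rightarrow$ essential, stability of the Krull property under regular localization (\cite[Theorem 8.2]{H}), Theorem~\ref{12}(c), Theorem~\ref{23}, and \cite[Theorem 10.1]{H} for the final clause. Your worry about non-regular primes is unnecessary, since the paper already observes that every $M\in Ass(K/A)$ is regular, so Theorem~\ref{12}(c) never requires checking $A_{(P)}$ for non-regular $P$.
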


\section{Noetherian perinormal rings}

According to \cite{ES}, a domain $D$ {\em satisfies $(R_1)$} if $D_P$ is
a valuation domain whenever $P$ is a height one prime of $D$. By \cite[Proposition 3.2]{ES},  any perinormal domain  satisfies $(R_1)$.
One of the main results of \cite{ES}, Theorem 4.7, gives the following  description of the perinormal universally catenary Noetherian domains.

\begin{theorem} {\em (\cite[Theorem 4.7]{ES})} \label{24}
Let $D$ be a universally catenary Noetherian domain. The following are equivalent.

$(a)$ $D$ is perinormal.

$(b)$ $D$ satisfies $(R_1)$ and, for each $P\in  Spec (D)$, $D_P$ is the only ring
$C$ between $D_P$ and its integral closure such that $D_P\subseteq C$ is unibranched.
\end{theorem}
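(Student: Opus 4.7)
Assume $(a)$. Since perinormality is local (Proposition~\ref{4}) and passes to flat overrings (Proposition~\ref{3}), each $D_P$ is perinormal. For $\mathrm{ht}\,P = 1$, the integral closure $D_P'$ of $D_P$ in $K$ is a GD-overring---going down being automatic for integral extensions between one-dimensional domains---hence flat over $D_P$ by perinormality, and thus equal to $D_P$ by Lemma~\ref{2}(d); a Noetherian one-dimensional local integrally closed domain is a DVR, so $(R_1)$ holds. For the second clause, given $P \in \mathrm{Spec}(D)$ and a unibranched integral extension $D_P \subseteq C$ lying inside $(D_P)'$, Proposition~\ref{5} applied to the perinormal ring $D_P$ forces $C = D_P$.

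\textbf{Reverse direction: reduction to $B \cap D' = D$.} Assume $(b)$. By Proposition~\ref{4} we may assume $D$ is local with maximal ideal $\mathfrak{m}$, all hypotheses in $(b)$ being preserved under localization. Let $B$ be a GD-overring of $D$, let $D'$ denote the integral closure of $D$ in $K$, and set $C := B \cap D'$. Then $D \subseteq C$ is integral, and going down for $D \subseteq C$ is inherited from going down for $D \subseteq B$ by lifting primes of $C$ to $B$ through localization and restricting back. Applying Theorem~\ref{17} to $D \subseteq C$, every nonzero prime of $D$ is unibranched in $C$; since $D$ and $C$ are both domains inside $K$, the zero prime is trivially unibranched, so $D \subseteq C$ is unibranched. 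Hypothesis $(b)$ at $P = \mathfrak{m}$ then yields $C = D$, that is, $B \cap D' = D$.

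\textbf{From $B \cap D' = D$ to flatness.} By Lemma~\ref{2}(a), flatness of $D \subseteq B$ reduces to showing $B_M = D_P$ for each $M \in \mathrm{Max}(B)$ with $P := M \cap D$. The case $P = (0)$ is trivial. When $\mathrm{ht}\,P = 1$, the ring $D_P$ is a DVR by $(R_1)$ and $B_M$ is a local overring of $D_P$ in $K$, so $B_M$ is either $K$ (impossible, since $M$ is a proper prime of the nontrivial overring $B$) or $D_P$ itself. For $\mathrm{ht}\,P \ge 2$ one argues by induction on height, using universal catenarity to invoke a Ratliff-type dimension formula for the fibers of $\mathrm{Spec}(B) \to \mathrm{Spec}(D)$; a chain-descent through a height-one specialization, combined with $B \cap D' = D$ applied in the relevant localizations, reduces matters to the base case. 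I expect this dimension-theoretic descent to be the main technical obstacle, and it is precisely the step where universal catenarity is genuinely used, as it is what makes chains in $B$ track chains in $D$ with the correct heights; without it, the reduction to the $(R_1)$ base case would break down.
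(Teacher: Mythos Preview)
Your forward direction is fine, and agrees with what the paper would do (via Propositions~\ref{3}, \ref{4}, \ref{5} and \cite[Proposition~3.2]{ES}).

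Your reverse direction, however, has real gaps and diverges from the paper's method. First, the claim that going down for $D\subseteq C:=B\cap D'$ is ``inherited'' from going down for $D\subseteq B$ is not justified: a prime of $C$ need not be the contraction of a prime of $B$, so your lifting-and-restricting manoeuvre does not go through. Second, Theorem~\ref{17} (McAdam) only tells you that primes of height $\geq 2$ are unibranched in $C$; you still need to handle height-one primes separately (this is where $(R_1)$ and $C\subseteq D'$ would enter, but you do not make that argument). Third, and most seriously, your final paragraph is not a proof: you explicitly leave the passage from $B\cap D'=D$ to flatness as a ``dimension-theoretic descent'' that you ``expect'' to work. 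That is precisely the heart of the matter, and it is not at all clear that an induction on height together with a Ratliff-type formula will close the gap in the way you suggest.

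The paper's route (via the proof of Theorem~\ref{28}, which specializes to the domain case once one observes, as in \cite[Lemma~4.3]{ES}, that universal catenarity plus $(R_1)$ give the DVR hypothesis there) is both different and more efficient. Rather than intersecting with $D'$ and then fighting for flatness, one shows directly that $B\subseteq D'$: for each height-one prime $P'$ of $D'$, the contraction $P:=P'\cap D$ has $D_P$ a DVR, and since $\mathrm{Spec}(B)\to\mathrm{Spec}(D)$ is surjective (Proposition~\ref{9}), one finds $B\subseteq B_{(Q)}=D_P=D'_{P'}$; intersecting over all such $P'$ and using that $D'$ is Krull gives $B\subseteq D'$. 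Now $B$ itself is an integral GD-overring of $D$, McAdam applies directly to $D\subseteq B$, the $(R_1)$ argument handles the height-one primes, and hypothesis~$(b)$ immediately yields $B=D$. No separate flatness argument is needed at all.
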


The universal catenary is used to derive that: $(\sharp)$ every height one prime ideal of the integral closure of $D$ contracts to a height one prime ideal of $D$, cf. \cite[Lemma 4.3]{ES}.  Condition $(\sharp)$ is clearly stable under localizations, because $(D_P)'=D'_P$.
Since every perinormal domain satisfies $(R_1)$, we could merge conditions  $(R_1)$ and $(\sharp)$ in order to rephrase Theorem \ref{20} as follows:

\begin{theorem} \label{21}
Let $D$ be a  Noetherian domain  with integral closure $D'$.
Assume that $D_{P\cap A}$ is a DVR for every height one prime ideal $P$ of $D'$. The following are equivalent.

$(a)$ $D$ is perinormal.

$(b)$ For each $P\in  Spec (D)$, $D_P$ is the only ring $C$ between $D_P$ and its integral closure such that $D_P\subseteq C$ is unibranched.
\end{theorem}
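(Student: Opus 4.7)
The plan is to reduce Theorem \ref{21} to Theorem \ref{24} by showing that the standing hypothesis ``$D_{P\cap D}$ is a DVR for every height one prime $P$ of $D'$'' is precisely the conjunction of $(R_1)$ and $(\sharp)$ for $D$. Once this equivalence of hypotheses is established, the remark preceding Theorem \ref{21} (that universal catenarity enters the proof of \cite[Theorem 4.7]{ES} only through \cite[Lemma 4.3]{ES}, to produce $(\sharp)$) means the Epstein--Shapiro argument applies verbatim under our weaker hypothesis.

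For the equivalence of hypotheses, the direction $(R_1)+(\sharp)\Rightarrow\text{hypothesis}$ is immediate: if $P$ is height one in $D'$, then $P\cap D$ is height one by $(\sharp)$, and $D_{P\cap D}$ is a DVR by $(R_1)$. For the converse, the hypothesis gives $(\sharp)$ at once, since $D_{P\cap D}$ being a DVR forces $P\cap D$ to have height one. To obtain $(R_1)$, let $Q$ be a height one prime of $D$ and pick $0\neq a\in Q$; by Krull's principal ideal theorem $Q$ is a minimal prime of $aD$. The Mori--Nagata theorem makes $D'$ a Krull domain, and by lying over there is some $P'\in\Spec(D')$ with $P'\cap D=Q$. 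If a prime $P_0\subsetneq P'$ still contained $a$, then by incomparability in the integral extension $D\subseteq D'$ the contraction $P_0\cap D$ would lie strictly below $Q$ and still contain $a$, contradicting the minimality of $Q$ over $aD$. Hence $P'$ is minimal over $aD'$, so of height one in the Krull domain $D'$, and the standing hypothesis yields that $D_Q=D_{P'\cap D}$ is a DVR.

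With the equivalence of hypotheses in hand, I would prove the theorem along the lines of \cite[Theorem 4.7]{ES}. For $(a)\Rightarrow(b)$: perinormality localizes (cf.\ Proposition \ref{3}), so $D_P$ is perinormal for each $P\in\Spec(D)$, and then Proposition \ref{5} forces any integral unibranched overring $C$ of $D_P$ to coincide with $D_P$. For $(b)\Rightarrow(a)$: one shows that an arbitrary GD-overring $B$ of $D$ is $D$-flat by verifying $D_{M\cap D}=B_M$ for every maximal ideal $M$ of $B$ and applying Lemma \ref{2}(a), reducing via localization to a statement handled by condition (b) together with $(R_1)$ and $(\sharp)$. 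The main obstacle is invoking the Epstein--Shapiro machinery as a black box after replacing universal catenarity by the combined hypothesis; in particular one must check that the hypothesis is preserved under localization of $D$ at a prime $P$, which is clear because $(D_P)'=(D')_P$ and the height one primes of $(D')_P$ correspond to the height one primes $P''$ of $D'$ with $P''\cap D\subseteq P$.
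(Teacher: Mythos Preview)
Your proposal is correct and matches the paper's own treatment of Theorem~\ref{21}, which is presented there without a standalone proof but as a rephrasing of Theorem~\ref{24}: the paper observes that universal catenarity enters the Epstein--Shapiro argument only through $(\sharp)$, that $(\sharp)$ is stable under localization, and that perinormality already forces $(R_1)$, so $(R_1)$ and $(\sharp)$ may be merged into the single DVR hypothesis --- exactly your reduction, with your explicit verification that the hypothesis is equivalent to $(R_1)+(\sharp)$ (and localizes) filling in details the paper leaves to the reader. It is worth noting, however, that for the zero-divisor generalization (Theorem~\ref{28}) the paper deliberately abandons the Epstein--Shapiro proof as a black box and instead gives a self-contained argument built on McAdam's going-down theorem (Theorems~\ref{22} and~\ref{17}); specialized to domains, that route furnishes an alternative, more direct proof of Theorem~\ref{21}.
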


The purpose of this section is to extend this result for rings with zero-divisors. Our approach is not to adapt the proof of  \cite[Theorem 4.7]{ES} but to adapt and use the following result of McAdam \cite[Theorem 2]{Mc}. Recall that if $A\subseteq B$ is a ring extension, a  prime ideal $P$ of $A$ is called  {\em unibranched} in $B$ if there exists a unique prime ideal of $B$ lying over $P$.
%{\em In this section all rings are Marot rings}.

\begin{theorem} {\em (\cite[Theorem 2]{Mc})}\label{22}
Let $A$ be a Noetherian domain and $B$   an integral overring of $A$. Then $A\subseteq B$ has going down if and only if every prime ideal of $A$ of height $\geq 2$ is unibranched in $B$.
\end{theorem}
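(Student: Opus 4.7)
The theorem splits naturally into two implications of very different character; I address them in opposite order.

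\textbf{Backward direction ($\Leftarrow$).} Suppose every prime of $A$ of height $\geq 2$ is unibranched in $B$, and let $P_1 \subsetneq P_2$ in $\operatorname{Spec}(A)$ with $Q_2 \in \operatorname{Spec}(B)$ lying over $P_2$. If $\operatorname{ht}(P_2) = 1$, then $P_1 = (0)$ since $A$ is a domain, and $Q_1 := (0)$ is a prime of $B$ (because $B \subseteq K$ is itself a domain) contained in $Q_2$ with contraction $P_1$. If $\operatorname{ht}(P_2) \geq 2$, apply Lying Over (Cohen--Seidenberg) to find $Q_1' \in \operatorname{Spec}(B)$ with $Q_1' \cap A = P_1$, then Going Up along $P_1 \subset P_2$ to extend $Q_1'$ to a prime $Q' \supseteq Q_1'$ over $P_2$. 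The unibranched hypothesis on $P_2$ forces $Q' = Q_2$, whence $Q_1' \subseteq Q_2$ as required.

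\textbf{Forward direction ($\Rightarrow$).} Assume going down holds and $P$ has height $\geq 2$; suppose for contradiction that two distinct primes $Q_1 \neq Q_2$ of $B$ lie over $P$. Localizing at $A \setminus P$ (GD and integrality both localize, and primes over $P$ become maximal in the fiber), I reduce to $(A,P)$ Noetherian local of dimension $\geq 2$ with $Q_1, Q_2$ distinct maximal ideals of $B$. By incomparability, choose $y \in Q_1 \setminus Q_2$ and let $f(Y) = Y^n + a_{n-1}Y^{n-1} + \cdots + a_0$ be the minimal monic relation of $y$ over $A$. Reducing $f(y)=0$ modulo $Q_1$ forces $a_0 \in P$; and not all $a_i$ can lie in $P$ (else $y^n \in PB$ would give $y \in \sqrt{PB} \subseteq Q_2$, contradicting $y \notin Q_2$). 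Let $m$ be minimal with $a_m \notin P$ and split $f(Y) = Y^m g(Y) + h(Y)$ with $g$ monic of degree $n-m$, $g(0) = a_m \notin P$, and $h$ having all coefficients in $P$. Reducing modulo $Q_2$ and using $\bar y \neq 0$ in the field $B/Q_2$, one finds $\bar g(\bar y) = 0$, so $u := g(y) \in Q_2$; reducing modulo $Q_1$ gives $\bar u = \bar a_m \neq 0$, so $u \notin Q_1$. Thus $y \in Q_1 \setminus Q_2$, $u \in Q_2 \setminus Q_1$, and $y^m u = -h(y) \in PB$.

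\textbf{Main obstacle.} Converting this elegant symmetry into a contradiction is the crux of the proof. The plan is to combine Ratliff's theorem (a height-$\geq 2$ prime of a Noetherian domain contains infinitely many height-$1$ primes) with Krull's principal ideal theorem (each nonzero element lies in only finitely many height-$1$ primes) to choose a height-$1$ prime $p \subset P$ containing an appropriate subset of $\{a_0,\ldots,a_{m-1}\}$ (hence $y^m u \in pB$) but avoiding $a_m$, then lift $p$ via GD to primes $q_i \subseteq Q_i$ and chase whether $y, u$ land in $q_i$. The delicate point is that $(a_0,\ldots,a_{m-1})A$ may itself have height $\geq 2$, so no single height-$1$ $p$ contains all generators simultaneously; the likely remedy is to pass to the finite integral extension $A \subseteq A[y]$ (GD transfers by lifting through $A[y] \subseteq B$, and $A[y]$ is Noetherian with explicit primes $M_Y \ni y$ and $M_g \not\ni y$ over $P$ arising from the factorization $\bar f = Y^m \bar g$) and iterate the descent, invoking Noetherianity of $A[y]$ and the induced GD to terminate the process.
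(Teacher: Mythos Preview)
Your backward direction is correct and standard.

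Your forward direction, however, is not a proof but a programme that you yourself flag as unresolved (``Main obstacle'').  The minimal-polynomial manoeuvre producing $y\in Q_1\setminus Q_2$, $u\in Q_2\setminus Q_1$ with $y^{m}u\in PB$ is valid and attractive, but the plan you outline for extracting a contradiction does not close.  Even granting a height-one prime $p\subset P$ with $(a_0,\dots,a_{m-1})\subseteq p$ and $a_m\notin p$, going down would give $q_i\subseteq Q_i$ over $p$ with $y\in q_1$ and $u\in q_2$; all this shows is that $p$ is itself non-unibranched in $B$, which is no contradiction since $\operatorname{ht}p=1$.  The suggested passage to $A[y]$ and ``iteration'' is not a well-founded descent: at each stage you again land on a height-one prime with two preimages, and nothing decreases.

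The paper does not prove Theorem~\ref{22} (it is quoted from McAdam), but its proof of Theorem~\ref{17} is explicitly an adaptation of McAdam's argument, and it exhibits the missing idea: the \emph{conductor} $I=A:B$.  After reducing, as McAdam does, to $B=A[u]$ (module-finite over the Noetherian domain $A$, so $I\neq 0$), the key observation is that any prime of $A$ not containing $I$ is automatically unibranched in $B$.  One then works upstairs in $B$: let $W$ be the finite set of height-one primes of $B$ that contain $I$ and lie inside $Q_1$; by prime avoidance (using $\operatorname{ht}Q_1\geq 2$, which follows from GD) pick $x\in Q_1\setminus(Q_2\cup\bigcup W)$; by the Hauptidealsatz choose a height-one prime $N$ with $x\in N\subseteq Q_1$; by going down find $N'\subseteq Q_2$ over $M:=N\cap A$.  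Since $x\in N\setminus Q_2$ we have $N\neq N'$, so $M$ is non-unibranched, hence $I\subseteq M\subseteq N$, forcing $N\in W$ --- contradicting $x\notin\bigcup W$.  The conductor packages globally the ``which primes see the branching'' information you were trying to track coefficient-by-coefficient, and bypasses the height obstruction you identified.
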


We partially extend this result for rings with zero-divisors. Our proof is just an adaptation of the original proof.

\begin{theorem} \label{17}
Let $A$ be a Noetherian ring. If  $B$ is an integral GD-overring of $A$, then  the non-minimal regular prime ideals of $A$  are unibranched in $B$.
\end{theorem}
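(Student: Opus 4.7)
I would adapt McAdam's original argument (Theorem~\ref{22}) to the Marot Noetherian setting, arguing by contradiction: assume $P$ is a non-minimal regular prime of $A$ yet there are distinct primes $Q_1,Q_2$ of $B$ with $Q_i\cap A=P$. A first reduction brings us to the case where $B$ is Noetherian. Since $Q_1,Q_2$ are regular primes (they contract to the regular prime $P$) and $B$ inherits the Marot property as an overring of the Marot ring $A$, the incomparability $Q_1\not\subseteq Q_2$ in the integral extension supplies a regular element $y\in Q_1\setminus Q_2$: if every regular element of $Q_1$ lay in $Q_2$, the Marot generation of $Q_1$ would give $Q_1\subseteq Q_2$, contradicting incomparability of primes over $P$. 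Replace $B$ by $A[y]$, a finite integral $A$-algebra (hence Noetherian), still an overring of $A$ (since $y\in K$), and separating $Q_1$ from $Q_2$ at the $A[y]$-level. A routine check shows going down for $A\subseteq B$ restricts to going down for $A\subseteq A[y]$ by restricting the lifts through $A[y]$.

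Next I build the key configuration. Since $P$ is non-minimal regular, a saturated chain of primes of $A$ ending at $P$ furnishes a regular prime $P_0\subsetneq P$ with $\mathrm{ht}(P/P_0)=1$. Krull's principal ideal theorem then supplies $x\in P\setminus P_0$ such that $P$ is a minimal prime of $P_0+xA$. Going down applied to $P_0\subsetneq P$ and each $Q_i$ yields primes $R_i\subseteq Q_i$ of $B$ with $R_i\cap A=P_0$. A contraction argument combined with incomparability then shows that $Q_i$ is itself a minimal prime of $R_i+xB$: any prime $M\subseteq Q_i$ containing $R_i+xB$ satisfies $P_0+xA\subseteq M\cap A\subseteq P$, so the minimality of $P$ over $P_0+xA$ forces $M\cap A=P$, whence $M=Q_i$ by incomparability of primes of $B$ over $P$. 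In particular $\mathrm{ht}(Q_i/R_i)=1$ in the Noetherian $B$.

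The contradiction now comes by comparing the (finite) sets of minimal primes of $R_1+xB$ and $R_2+xB$ in the Noetherian $B$, splitting on whether $R_1\subseteq Q_2$. If $R_1\subseteq Q_2$, the argument of the previous paragraph applied to $Q_2$ shows that $Q_2$ is also a minimal prime of $R_1+xB$, so both $Q_1$ and $Q_2$ belong to the minimal-prime set of a single ideal; iterating the construction at a lower level (choosing a regular prime $P_{-1}\subsetneq P_0$ and producing primes $R_i'\subseteq R_i$ over $P_{-1}$ by going down) exhibits incomparable primes of $B$ over $P_{-1}$ whose configuration violates going down. If instead $R_1\not\subseteq Q_2$, one extracts (via Marot applied to the regular ideal $R_1$) a regular element $y'\in R_1\setminus Q_2$, replays the initial reduction one level below, and again contradicts going down.

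The main obstacle is orchestrating this final case analysis: the chain of going-down applications must be tracked carefully so that termination is guaranteed (by Noetherianity of $B$ after the initial reduction) and so that the regularity of the successive primes $P_0,P_{-1},\ldots$ is preserved, keeping Krull's principal ideal theorem applicable. A secondary subtlety lies in the reduction to $B=A[y]$, which depends critically on the Marot property of $B$ to deliver the regular separator $y\in Q_1\setminus Q_2$, and on verifying that going down genuinely descends from $B$ to the intermediate subalgebra $A[y]$.
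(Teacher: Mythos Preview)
Your initial reduction to a module-finite extension $B=A[y]$ and the verification that going down descends to the subextension are correct and match the paper's opening move (the paper does not even bother to choose $y$ regular, since any $u\in Q_1\setminus Q_2$ lies in $K$ and $A[u]$ is automatically finite over $A$ with regular conductor). After that point, however, your argument diverges and does not close.

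The descent through a chain $P\supsetneq P_0\supsetneq P_{-1}\supsetneq\cdots$ of regular primes does not lead to a contradiction. Your case analysis ends with phrases like ``exhibits incomparable primes of $B$ over $P_{-1}$ whose configuration violates going down'' and ``again contradicts going down,'' but having two incomparable primes over a single prime of $A$ is not a failure of going down; going down is about lifting chains, not about uniqueness of lifts. When the descent reaches a \emph{minimal} regular prime $P_0$ with two primes of $B$ lying over it, nothing in your hypotheses is contradicted---you simply run out of room without having produced a violation. You yourself flag ``orchestrating this final case analysis'' as the main obstacle, and indeed it is: as written, there is no terminal contradiction.

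The missing device is the \emph{conductor} $I=A:B$, which is a regular ideal of both $A$ and $B$ once $B$ is module-finite over $A$. The paper's argument runs as follows: $I\subseteq P$ (else $P$ would be unibranched), and there are only finitely many minimal regular primes $N_1,\dots,N_s$ of $B$ with $I\subseteq N_j\subseteq Q_1$ (each is minimal over $I$). Prime avoidance then yields a \emph{regular} $x\in Q_1\setminus(Q_2\cup N_1\cup\cdots\cup N_s)$. By the Principal Ideal Theorem there is a minimal regular prime $N$ with $xB\subseteq N\subseteq Q_1$. Setting $M=N\cap A$, going down produces $N'\subseteq Q_2$ in $B$ with $N'\cap A=M$; since $x\in N\setminus Q_2$ we have $N\neq N'$, so $M$ is not unibranched in $B$, forcing $I\subseteq M\subseteq N$. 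But then $N$ is one of the $N_j$, contradicting $x\notin N_j$. This is a single, non-iterative contradiction; your descending chain is not needed.
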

\begin{proof}
Deny. There exist $Q_1\not\subseteq Q_2$  prime ideals of $B$ such
$Q_1\cap A=Q_2\cap A=P$ where $P$ is a non-minimal regular prime. Let $u\in Q_1-Q_2$. Since 'lying over' holds for $A[u]\subseteq B$, it is easy to see that  $A\subseteq A[u]$ satisfies going down. Replacing $B$ by $A[u]$, we may assume that $B=A[u]$. By going down, it follows that  $Q_1$ is a non-minimal regular prime ideal. As $A\subseteq B$ is a finite extension, the
 conductor  $I=A:B$  is a regular ideal of $A$ and $B$.
Note that $I\subseteq P$, otherwise $P$ would be unibranched in $B$.
Let $W=\{N_1$,...,$N_s\}$ be the (possibly empty) set of minimal regular  primes $H$ of $B$ such that  $I\subseteq H\subseteq Q_1$. Note that $W$ is finite because every $H\in W$ is minimal over $I$.
By prime avoidance, we can find a regular element $x\in Q_1- (Q_2\cup N_1\cup\cdots \cup N_s)$.
By Principal Ideal Theorem, there exists    a minimal regular  prime ideal  $N$ such that $xB\subseteq N\subseteq Q_1$.
By going down, there exists a prime $N'\subseteq Q_2$ of $B$ such that $N'\cap A=N\cap A:=M$.
%Note that $M$ is regular, because $N$ is regular.
As $x\in N$, we get $N\not\subseteq Q_2$, so $N\neq N'$, hence $M$ is not unibranched in $B$, thus $I\subseteq M\subseteq N$.
 It follows that $N\in W$, which is a  contradiction.
\end{proof}

We extend Theorem \ref{21} for rings with zero-divisors.  We say that an overring $B$ of a ring $A$ is a {\em unibranched overring} if the extension $A\subseteq B$ is unibranched.

\begin{theorem}\label{28}
Let $A$ be a Noetherian ring  with integral closure $A'$.
Assume that  $A_{(P\cap A)}$ is a DVR
for every minimal regular prime $P$ of $A'$.
The following are equivalent.

$(a)$ $A$ is perinormal.

$(b)$ Whenever $P$ is a prime ideal of $A$ and $B$ is an integral unibranched overring of $A_{(P)}$, we have  $A_{(P)}= B$.
\end{theorem}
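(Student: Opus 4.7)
The direction $(a)\Rightarrow(b)$ is immediate: by Proposition~\ref{3}, the flat overring $A_{(P)}$ of $A$ is perinormal, so Proposition~\ref{5} applied to $A_{(P)}$ and its integral unibranched overring $B$ yields $A_{(P)} = B$.

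For $(b)\Rightarrow(a)$, Proposition~\ref{4} reduces the problem to showing that $A_{(M)}$ is perinormal for each maximal ideal $M$ of $A$. Fix such $M$ and a GD-overring $B$ of $A_{(M)}$, and let $C$ be the integral closure of $A_{(M)}$ in $B$. The extension $A_{(M)} \subseteq C$ is integral; combining the GD hypothesis on $A_{(M)} \subseteq B$ with lying-over for $C \subseteq B$ shows it is GD as well. The plan is to prove $A_{(M)} \subseteq C$ is unibranched, then invoke $(b)$ with the prime $M$ to deduce $A_{(M)} = C$, and finally upgrade this integral-closedness to flatness of $A_{(M)} \subseteq B$.

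For the unibranched claim we split into three cases. Non-regular primes of $A_{(M)}$ are automatically unibranched in $C$ by Remark~\ref{25}, and non-minimal regular primes by the ring-theoretic McAdam result, Theorem~\ref{17}. For a minimal regular prime $P$ of $A_{(M)}$, a lying-over/incomparability argument along $A \subseteq A'$ (using Noetherianness) identifies $P \cap A$ with the contraction of some minimal regular prime of $A'$, so the DVR hypothesis renders $A_{(M)(P)}$ a DVR---in particular, integrally closed in its own quotient field. Localizing the integral extension $A_{(M)} \subseteq C$ at $P$ therefore collapses $C_{(P)}$ to $A_{(M)(P)}$, forcing a unique prime of $C$ to lie over $P$. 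Hypothesis $(b)$ (applied to the prime $M$) now gives $A_{(M)} = C$.

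The main obstacle is the final upgrade from $A_{(M)} = C$ to flatness of $A_{(M)} \subseteq B$. By Lemma~\ref{2}(a), it suffices to verify $A_{(M)(P)} = B_{(N)}$ for each maximal $N$ of $B$ with $P := N \cap A_{(M)}$. Localization preserves integral closure and the going-down property, and by Proposition~\ref{9} the induced map $\Spec(B_{(N)}) \to \Spec(A_{(M)(P)})$ is surjective. The DVR hypothesis is stable under the relevant localizations and, combined with the unibranched analysis, forces $A_{(M)(P)}$ to be a $P$-ring by Theorem~\ref{12}(c), hence essential by Theorem~\ref{12}(a). Proposition~\ref{15} then delivers $A_{(M)(P)} = B_{(N)}$, completing the proof.
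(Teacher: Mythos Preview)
Your argument for $(a)\Rightarrow(b)$ is fine, and the reduction at the start of $(b)\Rightarrow(a)$ via Proposition~\ref{4} is reasonable. The unibranched analysis of $A_{(M)}\subseteq C$ (non-regular primes via Remark~\ref{25}, non-minimal regular primes via Theorem~\ref{17}, minimal regular primes via the DVR hypothesis) is also sound, and hypothesis $(b)$ does give $A_{(M)}=C$.

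The genuine gap is the final ``upgrade.'' You assert that the DVR hypothesis together with the unibranched analysis forces $A_{(M)(P)}$ to be a P-ring via Theorem~\ref{12}(c). But Theorem~\ref{12}(c) requires that $(A_{(M)(P)})_{(Q)}$ be \emph{essential} for every $Q\in Ass(K/A_{(M)(P)})$, and you give no reason for this. An essential ring is integrally closed in $K$; your step $A_{(M)}=C$ only shows $A_{(M)}$ is integrally closed \emph{in $B$}, which says nothing about its relation to $K$. In fact, if the DVR hypothesis alone already made $A_{(M)(P)}$ a P-ring, Proposition~\ref{15} would yield perinormality without ever invoking $(b)$, and the theorem would collapse. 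So this step cannot be filled in as written: the detour through $C$ does not supply what Proposition~\ref{15} needs.

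The paper avoids this by never introducing $C$. After localizing so that $\Spec(B)\to\Spec(A)$ is surjective, it uses the DVR hypothesis and the Krull property of $A'$ to show $B\subseteq \bigcap_{P'}A'_{(P')}=A'$ directly; thus $B$ itself is an integral GD-overring of $A$. The unibranched analysis (Theorem~\ref{17} for non-minimal regular primes, the DVR hypothesis for minimal regular primes, Remark~\ref{25} for non-regular primes) then applies to $A\subseteq B$ rather than to $A\subseteq C$, and hypothesis $(b)$ gives $A=B$ outright. The key move you are missing is the containment $B\subseteq A'$, which converts an arbitrary GD-overring into an integral one.
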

\begin{proof}
$(a) \Rightarrow (b)$ follows from Propositions \ref{4} and \ref{5}.

$(b) \Rightarrow (a)$. Let $B$ be a GD-overring of $A$, $N$ be a prime ideal of $B$ and $M:=N\cap A$. By Lemma \ref{2}, it suffices to prove that $A_{(M)}=B_{(N)}$, because our rings are Marot.
By Lemma \ref{9}, the  map  ${Spec} (B_{(N)})\rightarrow Spec (A_{(M)})$ is surjective. Note that $A_{(M)}\subseteq B_{(N)}$ satisfies going down and our assumption on $A$ is inherited by $A_{(M)}$. Therefore, we may replace $A$ by $A_{(M)}$ and hence assume that the  map  ${Spec} (B)\rightarrow Spec (A)$ is surjective.
Let $P'$ be a minimal regular prime ideal of $A'$, set $P=P'\cap A$ and pick $Q\in Spec(B)$ such that $Q\cap A=P$. From the following three properties: $A_{(P)}\subseteq B_{(Q)}$, $A_{(P)}$ is a DVR and $QB_{(Q)}$ lies over $PA_{(P)}$, we get $B\subseteq B_{(Q)}=A_{(P)}={A'}_{(P')}$, cf. Theorem \ref{10}. Since $A'$ is a Krull ring, we get
$B\subseteq \cap_{P'}{A'}_{(P')}=A'$, so $B\subseteq A'$.
By Theorem \ref{17}, the non-minimal regular prime ideals of $A$ are unibranched in $B$. We claim that the minimal regular prime ideals of $A$ are unibranched in $B$. Indeed, let $P$ be a minimal regular prime ideal of $A$ and $Q\in Spec(A')$ lying over $P$. Since INC holds for $A\subseteq A'$, it follows that $Q$ is a minimal regular prime ideal of $A'$, hence $A_{(P)}$ is a DVR, by our hypothesis on $A$.
Now it is easy to see that if $Q\in Spec(B)$ lies over $P$, then $A_{(P)}=B_{(Q)}$, thus $P$ is unibranched in $B$. Finally, from Remark \ref{25}, the non-regular prime ideals of $A$ are unibranched in $B$.
Thus $A\subseteq B$ is unibranched and we get $A=B$ by our hypothesis.
\end{proof}

\noindent\textbf{Acknowledgement}\textit{.} The first author gratefully acknowledges the warm
hospitality of the Abdus Salam School of Mathematical Sciences GC University Lahore during his  visits in the period 2006-2015.
The second author is highly grateful to ASSMS GC University Lahore, Pakistan in supporting and facilitating this research.

%\smallskip\noindent
%
%
%\adresa{Facultatea de Matematica si Informatica\\University of Bucharest,14 A\-ca\-de\-mi\-ei Str., Bucharest, RO 010014, Romania.\\
%E-mail: {\tt tiberiu\ap fmi.unibuc.ro}, {\tt tiberiu\_dumitrescu2003\ap yahoo.com}
%}
%\salt
%\adresa{Abdus Salam School of Mathematical Sciences\\ GC University,
%Lahore. 68-B, New Muslim Town, Lahore 54600, Pakistan.\\
%E-mail: {\tt anamrane\ap gmail.com}
%}

\end{document}